\begin{document}

\newtheorem{theorem}[subsection]{Theorem}
\newtheorem{proposition}[subsection]{Proposition}
\newtheorem{lemma}[subsection]{Lemma}
\newtheorem{corollary}[subsection]{Corollary}
\newtheorem{conjecture}[subsection]{Conjecture}
\newtheorem{prop}[subsection]{Proposition}
\numberwithin{equation}{section}
\newcommand{\mr}{\ensuremath{\mathbb R}}
\newcommand{\dif}{\mathrm{d}}
\newcommand{\intz}{\mathbb{Z}}
\newcommand{\ratq}{\mathbb{Q}}
\newcommand{\natn}{\mathbb{N}}
\newcommand{\comc}{\mathbb{C}}
\newcommand{\rear}{\mathbb{R}}
\newcommand{\prip}{\mathbb{P}}
\newcommand{\uph}{\mathbb{H}}
\newcommand{\fief}{\mathbb{F}}
\newcommand{\majorarc}{\mathfrak{M}}
\newcommand{\minorarc}{\mathfrak{m}}
\newcommand{\sings}{\mathfrak{S}}
\newcommand{\fA}{\ensuremath{\mathfrak A}}
\newcommand{\mn}{\ensuremath{\mathbb N}}
\newcommand{\mq}{\ensuremath{\mathbb Q}}
\newcommand{\half}{\tfrac{1}{2}}
\newcommand{\f}{f\times \chi}
\newcommand{\summ}{\mathop{{\sum}^{\star}}}
\newcommand{\chiq}{\chi \bmod q}
\newcommand{\chidb}{\chi \bmod db}
\newcommand{\chid}{\chi \bmod d}
\newcommand{\sym}{\text{sym}^2}
\newcommand{\hhalf}{\tfrac{1}{2}}
\newcommand{\sumstar}{\sideset{}{^*}\sum}
\newcommand{\sumprime}{\sideset{}{'}\sum}
\newcommand{\sumprimeprime}{\sideset{}{''}\sum}
\newcommand{\shortmod}{\ensuremath{\negthickspace \negthickspace \negthickspace \pmod}}
\newcommand{\V}{V\left(\frac{nm}{q^2}\right)}
\newcommand{\sumi}{\mathop{{\sum}^{\dagger}}}
\newcommand{\mz}{\ensuremath{\mathbb Z}}
\newcommand{\leg}[2]{\left(\frac{#1}{#2}\right)}
\newcommand{\muK}{\mu_{\omega}}

\title[Moments and one level density of {H}ecke {$L$}-functions with sextic characters]{Moments and One level density of sextic {H}ecke {$L$}-functions}
%%\date{\today}
\author{Peng Gao and Liangyi Zhao}

\begin{abstract}
Let $\omega = \exp (2\pi i/3)$.  In this paper, we study moments of central values of sextic Hecke $L$-functions of $\mq(\omega)$ and one level density result for the low-lying zeros of sextic Hecke $L$-functions of $\mq(\omega)$. As a corollary, we deduce that, assuming GRH, at least $2/45$ of the members of the sextic family  do not vanish at $s=1/2$.
\end{abstract}

\maketitle
\noindent {\bf Mathematics Subject Classification (2010)}: 11L05, 11L40, 11M06, 11M41, 11M50,  11R16  \newline

\noindent {\bf Keywords}: Hecke $L$-functions, low-lying zeros, one level density, sextic Hecke characters

\section{Introduction}

     Due to important arithmetic information encoded by the central values of $L$-functions, their possible zeros there have been investigated extensively. In general, one expects an $L$-function to be non-vanishing at the central point unless there is a good reason for it. This can be further expounded by instancing an $L$-function attached to an elliptic curve of positive rank in view of the Birch-Swinnerton-Dyer conjecture or an $L$-function whose functional equation has the sign of $-1$. In the classical case of Dirichlet $L$-functions, it is conjectured by S. Chowla \cite{chow} that $L(1/2, \chi) \neq 0$ for every primitive Dirichlet character $\chi$. \newline

     There are two methods of studying these potential central zeros of $L$-functions.  One typical approach to achieve the non-vanishing result is to study the moments of a family of $L$-functions. In this way, H. Iwaniec and P. Sarnak \cite{I&S} showed that $L(1/2,\chi) \neq 0$ for at least $1/3$ of the primitive Dirichlet characters $\chi \mod{q}$. M. Jutilia \cite{Jutila} showed that that there are $\gg X/\log X$ fundamental discriminants $d$ with $|d| < X$ such that $L(1/2,\chi_d)\neq 0$. This result is improved by K. Soundararajan \cite{sound1}, who showed that $L(1/2,\chi_{8d})\neq 0$ for at least $87.5\%$ of the real characters $\chi_{8d}$. For the family of $L$-functions of primitive cubic Dirichlet characters, S. Baier and M. P. Young \cite{B&Y} showed that for any $\varepsilon >0$, there are $\gg X^{6/7-\varepsilon}$ primitive cubic Dirichlet characters $\chi$ with conductor $\leq X$ such that $L(1/2,\chi)\neq 0$. \newline

   The result of Baier and Young \cite{B&Y} is similar to another one of W. Luo \cite{Luo}, who studied moments of the family of $L(1/2, \chi)$ associated to cubic Hecke characters $\chi$ with square-free modulus in $\mq(\omega)$ with $\omega=\exp\left( 2\pi i/3 \right)$ and derived a non-vanishing result for this family.  A more general result for the first moment of central $L$-values associated to the $n$-th order Hecke characters of any number field containing the $n$-th root of unity was obtained by S. Friedberg, J. Hoffstein and D. Lieman \cite{FHL}.  Corresponding non-vanishing results were obtained by V. Blomer, L. Goldmakher and B. Louvel in \cite{BGL}. \newline

   In \cite{G&Zhao1, G&Zhao20}, the authors studied moments of $L(1/2,  \chi)$ for families of quadratic and quartic Hecke characters in $\mq(i)$ and $\mq(\omega)$. In this paper, we first extend our studies above to the family of $L$-functions associated to sextic Hecke characters in $\mq(\omega)$. We have \newline
\begin{theorem}
\label{firstmoment}
  Let $W: (0, \infty) \rightarrow \mr$ be a smooth, compactly supported function. For $y \rightarrow \infty$ and any $\varepsilon > 0$,
\begin{align*}
%%\label{1stmom}
   \sumstar_{c \equiv 1 \bmod {36}}L\left( \frac{1}{2},
   \chi_c \right)W\left( \frac{N(c)}{y} \right)=A\widehat{W}(1) y +O(y^{\frac {6}{7}+\varepsilon}),
\end{align*}
   where $\chi_c=\leg {\cdot}{c}_6$ is the sextic
residue symbol in $\mq(\omega)$ (defined in Section~\ref{sec2.4})
\begin{align*}
%%\label{1.4}
   \widehat{W}(1)=\int\limits^{\infty}_0 W(x) \ \dif x.
\end{align*}
  Moreover, $A$ is an explicit constant given in \eqref{eq:c} and $\sum^*$ denotes summation over squarefree elements of $\mz[\omega]$.
\end{theorem}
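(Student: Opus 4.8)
The plan is to follow the now-standard template for first moments of families of $L$-functions, combining an approximate functional equation with character-sum and large-sieve estimates. The squarefree hypothesis on $c$ guarantees that $\chi_c=\leg{\cdot}{c}_6$ is primitive modulo $c$ of order $6$, so $L(s,\chi_c)$ satisfies a clean functional equation whose root number is a normalized sextic Gauss sum attached to $\chi_c$ (recorded in an earlier section). From it one derives an approximate functional equation
\[
  L\!\left(\tfrac12,\chi_c\right)=\sum_{n}\frac{\chi_c(n)}{N(n)^{1/2}}\,V\!\left(\frac{N(n)}{\sqrt{N(c)}}\right)+\varepsilon_c\sum_{n}\frac{\overline{\chi_c}(n)}{N(n)^{1/2}}\,\widetilde V\!\left(\frac{N(n)}{\sqrt{N(c)}}\right),
\]
with $V,\widetilde V$ smooth and of rapid decay and $\varepsilon_c$ the root number. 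Substituting this into the $W$-weighted sum, separating the $n$- and $c$-dependences in $V,\widetilde V$ by Mellin inversion, and exchanging the order of summation, the first moment reduces to sums $\sum_{n}N(n)^{-1/2}\,S(n)$ over ideals $n$ with $N(n)\le y^{1/2+\varepsilon}$ (the remaining range being negligible by the decay of $V,\widetilde V$), where $S(n)=\sumstar_{c\equiv 1\bmod 36}\chi_c(n)\,W_1(N(c)/y)$ for a smooth, rapidly decaying $W_1$ depending mildly on the Mellin parameter, together with a companion term built from the dual sum.

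The heart of the matter is the evaluation of $S(n)$. I would detect the squarefree condition on $c$ by Möbius inversion over $\mz[\omega]$, introducing a divisor $l$ with $l^2\mid c$, and then apply the sextic reciprocity law to convert $\chi_c(n)=\leg{n}{c}_6$ into $\leg{c}{n}_6$ up to explicit unit and $(1-\omega)$-factors; the congruence $c\equiv 1\bmod 36$ is exactly what forces $c$ to be primary and coprime to $6$, keeping those factors under control. After the flip, $c\mapsto\leg{c}{n}_6$, twisted by a fixed character of modulus dividing $36\,l^2$, is a Hecke character, and one isolates the case where it is principal. Writing $n$ through its sixth-power-free part, this happens precisely when $n$ is a perfect sixth power $n=m^6$ (up to units and the ramified prime); summing such terms over $m$ and over the Möbius variable $l$, and using that $V,\widetilde V$ tend to a constant near $0$, yields an absolutely convergent Euler product times $\widehat W(1)\,y$, which is the main term $A\widehat W(1)y$ with $A$ as in \eqref{eq:c}.

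It remains to bound the non-principal contributions, which splits into two regimes that one balances. For the off-diagonal terms of the first sum, the functional equation of the Hecke $L$-function attached to the non-principal character $\leg{\cdot}{n}_6$ gives a P\'olya--Vinogradov-type bound $\ll N(n)^{1/2+\varepsilon}$, and summing this against $N(n)^{-1/2}$ over $N(n)\le y^{1/2+\varepsilon}$ contributes only $O(y^{1/2+\varepsilon})$. The genuinely delicate term is the dual sum: opening the Gauss sum inside $\varepsilon_c\overline{\chi_c}(n)$ reduces it to bounding $\sum_{c\equiv 1\bmod 36}g_6(n,c)\,W_1(N(c)/y)$, a sum of sextic Gauss sums over the modulus $c$, for which no square-root cancellation is available on average. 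Factoring the sextic symbol into its quadratic and cubic parts, the quadratic part is controlled by the (sharp) quadratic large sieve over $\mz[\omega]$, while the cubic part calls for a cubic large sieve / bound for sums of cubic Gauss sums in the spirit of Heath--Brown and of Baier--Young; optimizing the cutoff between the trivial range and the large-sieve range, and simultaneously truncating the Möbius variable $l$, produces the error $O(y^{6/7+\varepsilon})$.

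The main obstacle is this last step: the available bounds for averages of sextic (equivalently, cubic) Gauss sums are lossy, and it is their quality — not anything in the main-term analysis — that pins the error at $y^{6/7+\varepsilon}$ rather than at the $y^{1/2+\varepsilon}$ one would naively hope for. A subsidiary but unavoidable chore is the careful bookkeeping of the ramified prime $1-\omega$, of the imprimitive terms (where $\gcd(n,c)\neq 1$ or $1-\omega\mid n$), and of the root-of-unity factor in the sextic reciprocity law, all of which must be handled cleanly for the constant $A$ in \eqref{eq:c} to come out in closed form.
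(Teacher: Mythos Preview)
Your overall architecture is right, but two genuine gaps prevent the argument from reaching $O(y^{6/7+\varepsilon})$.

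First, the symmetric approximate functional equation with cutoff $\sqrt{N(c)}$ is not what the paper uses, and it would not suffice. The paper takes an \emph{unbalanced} version with a free parameter $z$: the first sum has effective length $\asymp y/z$ and the dual sum has length $\asymp z$. This extra degree of freedom is precisely what produces the exponent $6/7$, after optimizing at $z=y^{2/7}$. With the symmetric cutoff both pieces have length $\asymp y^{1/2}$, and once one inserts the correct bound for the dual sum (next paragraph) and sums over $N(n)\le y^{1/2}$, the dual contribution is already $O(y)$; no large-sieve balancing recovers a power saving. Relatedly, for the off-diagonal part of the first sum the paper does not use P\'olya--Vinogradov but an $L^1$-average of $|L(1/2+it,\psi\chi^{(a)})|$ over $a$, obtained from the sextic large sieve of Blomer--Goldmakher--Louvel via Cauchy--Schwarz; this gives $M_1'\ll y^{1/2}(y/z)^{1/2+\varepsilon}$.

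Second, and more seriously, your proposed treatment of the dual sum is not the right mechanism. The sextic Gauss sum $g_6(c)$ does not factor as $g_2(c)g_3(c)$, and neither the quadratic nor the cubic large sieve gives direct control on averages of Gauss sums over the \emph{modulus} $c$. The correct input is Patterson's theorem on metaplectic Eisenstein series: the Dirichlet series $h(r,s;\chi)=\sum_c \chi(c)g_6(r,c)N(c)^{-s}$ continues meromorphically to $\Re s>1$ with at most a simple pole at $s=7/6$ and a convexity bound in the strip (Lemma~\ref{lem1}). Shifting the Mellin contour to $\Re s=1/2+\varepsilon$ then yields $H(a,\psi,y)\ll y^{1/2+\varepsilon}N(a)^{1/4}+y^{2/3}N(a)^{1/6+\varepsilon}$; the $y^{2/3}$ from the pole at $7/6$, summed over $N(a)\le z$ and balanced against $M_1'$, is exactly what fixes the error at $y^{6/7+\varepsilon}$.
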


We note here that for $c=1$, $\chi_c$ is the principal character instead of a sextic one.  It follows from \cite[Corollary 1.4]{BGL} that
\begin{align*}
%%\label{secondmom}
   \sumstar_{c \equiv 1 \bmod {36}} \left| L \left( \frac{1}{2},
   \chi_c \right) \right|^2W \left(  \frac{N(c)}{y} \right) \ll_{\varepsilon} y^{1+\varepsilon},
\end{align*}

    From this and Theorem \ref{firstmoment}, we readily deduce, via a standard argument (see \cite{Luo}), the following
\begin{corollary}
\label{cor1}
  For $y \rightarrow \infty$ and any $\varepsilon > 0$, we have
\begin{equation*}
   \# \left\{ c \in \mz[\omega] : c \equiv 1 \pmod {36}, c \; \mbox{square-free}, \; N(c) \leq y, \; L\left( \frac{1}{2}, \chi_c \right) \neq 0 \right\} \gg_{\varepsilon}
   y^{1-\varepsilon}.
\end{equation*}
\end{corollary}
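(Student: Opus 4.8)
The plan is to run the by-now-standard mollification-free argument of \cite{Luo}: combine the first-moment asymptotic of Theorem~\ref{firstmoment} with the second moment bound quoted from \cite[Corollary 1.4]{BGL} through Cauchy's inequality. I would proceed as follows.

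First, fix the weight. Choose $W:(0,\infty)\to\mr$ smooth, non-negative, compactly supported in $(0,1)$, and not identically zero, so that $\widehat{W}(1)=\int_0^\infty W(x)\,\dif x>0$. Since $A\neq 0$ (as is visible from the explicit product in \eqref{eq:c}) and the single term $c=1$ --- for which $\chi_c$ is the principal character --- perturbs the left-hand side of Theorem~\ref{firstmoment} by only $O(1)$, the error term $O(y^{6/7+\varepsilon})$ is dominated by the main term, and hence
\begin{equation*}
  \left|\ \sumstar_{c\equiv 1 \bmod{36}} L\!\left(\tfrac12,\chi_c\right) W\!\left(\tfrac{N(c)}{y}\right)\right| \gg y
\end{equation*}
for all sufficiently large $y$.

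Next, write $W=\sqrt{W}\cdot\sqrt{W}$ (legitimate because $W\geq 0$), note that only the $c$ with $L(\tfrac12,\chi_c)\neq 0$ contribute to the sum, and apply Cauchy's inequality:
\begin{equation*}
  \left|\ \sumstar_{c\equiv 1\bmod{36}} L\!\left(\tfrac12,\chi_c\right) W\!\left(\tfrac{N(c)}{y}\right)\right|^2 \ll \mathcal{N}(y)\cdot \sumstar_{c\equiv 1\bmod{36}} \left|L\!\left(\tfrac12,\chi_c\right)\right|^2 W\!\left(\tfrac{N(c)}{y}\right),
\end{equation*}
where, using that $W$ is bounded and supported in $(0,1)$ (so $W(N(c)/y)\neq 0$ forces $N(c)<y$),
\begin{equation*}
  \mathcal{N}(y) = \#\left\{ c\in\mz[\omega] : c\equiv 1 \bmod{36},\ c\ \text{square-free},\ N(c)\leq y,\ L\!\left(\tfrac12,\chi_c\right)\neq 0\right\}.
\end{equation*}
The second moment on the right is $\ll_\varepsilon y^{1+\varepsilon}$ by \cite[Corollary 1.4]{BGL}, while the left-hand side is $\gg y^2$ by the previous step; hence $\mathcal{N}(y)\gg_\varepsilon y^{1-\varepsilon}$, which is exactly the assertion of Corollary~\ref{cor1}.

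The step deserving the most care is the very first one: one must check that the leading constant does not vanish for an admissible $W$, i.e. that $A\neq 0$, which follows from its definition in \eqref{eq:c} once one verifies that none of the local factors vanish. Beyond that there is no genuine obstacle, since both analytic ingredients --- the first-moment asymptotic (the substance of this paper) and the second-moment upper bound --- are already in hand; the deduction itself is purely formal.
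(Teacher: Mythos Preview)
Your argument is correct and is precisely the ``standard argument (see \cite{Luo})'' that the paper invokes: combine the first-moment asymptotic of Theorem~\ref{firstmoment} with the second-moment bound from \cite[Corollary~1.4]{BGL} via Cauchy's inequality, after checking that the leading constant $A$ in \eqref{eq:c} does not vanish. There is nothing to add --- the paper does not spell out the details either, and your write-up fills them in faithfully.
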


   The main different feature of Theorem \ref{firstmoment} and the result in \cites{FHL} is that moment of our study is over square-free integers.  Our proof of Theorem \ref{firstmoment} is similar to that of the main result in \cite{Luo}, but our choice of the smooth weight follows the treatment in \cite{B&Y}. \newline

   The other approach, alluded to earlier, towards establishing the non-vanishing result is via the study of the $n$-level densities of low-lying zeros of families of $L$-functions.  The density conjecture of N. Katz and P. Sarnak \cites{KS1, K&S} suggests that the distribution of zeros near $1/2$ of a family of $L$-functions is the same as that of eigenvalues near $1$ of a corresponding classical compact group. For the family of quadratic Dirichlet $L$-functions, the density conjecture implies that $L(1/2, \chi) \neq 0$ for almost all such $\chi$.  Assuming GRH, A. E. \"{O}zl\"uk and C. Snyder  \cite{O&S} computed the one level density for this family to show that $L(1/2, \chi_d) \neq 0$ for at least $15/16$ of the fundamental discriminants $|d| \leq X$. Following the computations carried out in \cites{B&F, ILS}, one can improve this percentage to $(19-\cot (1/4))/16$. \newline

   Our next result is on the one level density of low-lying zeros of a family of sextic Hecke $L$-functions in $\mq(\omega)$. In \cite{Gu}, A. M. G\"ulo\u{g}lu studied the one level density of the low-lying zeros of
a family of cubic Hecke $L$-functions in $\ratq (\omega)$. The result was extended by C. David and A. M. G\"ulo\u{g}lu  in \cite{DG} to hold for test functions whose Fourier transforms are supported in $(-13/11, 13/11)$. This allows them to deduce that at least $2/13$ of the corresponding $L$-functions do not vanish at the central point under GRH. \newline

  Our result here is motivated by the above-mentioned works. To state it, we first need to set some notations.  Let
\[ C(X) = \{ c \in \intz[\omega] : (c,6) =1, \; c \; \mbox{squarefree}, \; X \leq N(c) \leq 2X \} . \]
We shall define in Section \ref{sec2.4} the primitive quadratic Kronecker symbol $\chi^{(72c)}$ for $c \in C(X)$.
We denote the non-trivial zeroes of the Hecke $L$-function
   $L(s, \chi^{(72c)})$ by $1/2+i \gamma_{\chi^{(72c)}, j}$.  Without assuming GRH, we order them as
\begin{equation*}
%%\label{zeroorder}
    \ldots \leq
   \Re \gamma_{\chi^{(72c)}, -2} \leq
   \Re \gamma_{\chi^{(72c)}, -1} < 0 \leq \Re \gamma_{\chi^{(72c)}, 1} \leq \Re \gamma_{\chi^{(72c)}, 2} \leq
   \ldots.
\end{equation*}
    We set
\begin{align*}
%%\label{normalroot}
    \tilde{\gamma}_{\chi^{(72c)}, j}= \frac{\gamma_{\chi^{(72c)}, j}}{2 \pi} \log X
\end{align*}
and define for an even Schwartz class function $\phi$,
\begin{equation*}
%%\label{Sdef}
S(\chi^{(72c)}, \phi)=\sum_{j} \phi(\tilde{\gamma}_{\chi^{(72c)}, j}).
\end{equation*}

We further let $\Phi_X(t)$ be a non-negative smooth function supported on $(1,2)$,
    satisfying $\Phi_X(t)=1$ for $t \in (1+1/U, 2-1/U)$ with $U=\log \log X$ and such that
    $\Phi^{(j)}_X(t) \ll_j U^j$ for all integers $j \geq 0$.  We refer the reader to \cite[Chapter 13]{Tu} for the construction of such functions.  Our result is as follows:
\begin{theorem}
\label{sexticmainthm}
Suppose that GRH is true for the Hecke $L$-functions $L(x, \chi^{(72c)})$ discussed above.  Let $\phi(x)$ be an even Schwartz function whose
Fourier transform $\hat{\phi}(u)$ has compact support in $(-45/43, 45/43)$, then
\begin{align}
\label{sexticdensity}
 \lim_{X \rightarrow +\infty}\frac{1}{\# C(X)}\sumstar_{(c, 6)=1}  S(\chi^{(72c)}, \phi)\Phi_X \left( \frac {N(c)}{X} \right)
 = \int\limits_{\mathbb{R}} \phi(x) \dif x.
\end{align}
   Here, as earlier, the ``$*$'' on the sum over $c$ means that the sum is restricted to squarefree elements $c$ of $\mathbb{Z}[\omega]$ .
\end{theorem}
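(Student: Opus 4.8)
The plan is to follow the standard Katz--Sarnak recipe for computing the one-level density of a family, reducing the problem via the explicit formula to sums of the associated Hecke characters over primes, which are then evaluated by a large sieve / Poisson summation argument in $\mathbb{Z}[\omega]$.

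\textbf{Step 1: The explicit formula.} Assuming GRH for each $L(s,\chi^{(72c)})$, I would apply the Riemann--von Mangoldt explicit formula to write
\[
S(\chi^{(72c)},\phi) = \widehat\phi(0)\frac{\log(\text{cond})}{\log X} + \frac{\text{(archimedean factor)}}{\log X} - \frac{2}{\log X}\sum_{\mathfrak{n}}\frac{\Lambda(\mathfrak{n})\chi^{(72c)}(\mathfrak{n})}{N(\mathfrak{n})^{1/2}}\,\widehat\phi\!\left(\frac{\log N(\mathfrak{n})}{\log X}\right),
\]
where $\mathfrak{n}$ runs over (powers of) prime ideals of $\mathbb{Z}[\omega]$ and $\Lambda$ is the associated von Mangoldt function. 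Since $\widehat\phi$ has support in $(-45/43,45/43)$, the sum is effectively over $N(\mathfrak{n}) \ll X^{45/43}$. The prime-power terms with $\mathfrak{n}=\mathfrak{p}^k$, $k\ge 2$ contribute $O(1/\log X)$ after summing over $c$, so the heart is the sum over prime ideals $\mathfrak{p}$.

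\textbf{Step 2: Averaging over the family.} Summing over $c \in C(X)$ with the weight $\Phi_X(N(c)/X)$ and dividing by $\#C(X)$, the main term $\widehat\phi(0)$ will come from the ``diagonal'' — after expressing $\chi^{(72c)}(\mathfrak{p})$ via quadratic reciprocity in $\mathbb{Q}(\omega)$ as a character in $c$ (roughly a Jacobi-type symbol $\leg{\cdot}{\mathfrak p}$ evaluated at $72c$, up to root-of-unity factors coming from the supplementary laws for the prime $72$). For each fixed prime $\mathfrak{p}$, the character sum $\summ_{(c,6)=1} \leg{c}{\mathfrak p}\Phi_X(N(c)/X)$ over squarefree $c$ must be evaluated: detect squarefreeness by Möbius over $\mathbb{Z}[\omega]$, then the inner sum is a sum of a nontrivial quadratic residue symbol to modulus $\mathfrak p$ over a smooth box, handled by Poisson summation in $\mathbb{Z}[\omega]$ (as in the authors' earlier papers \cites{G&Zhao1, G&Zhao20}). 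The zero-frequency term vanishes when $\mathfrak{p}$ is not a square; when $\mathfrak{p}=\mathfrak{p}_0^2$ the symbol $\leg{c}{\mathfrak p}$ is essentially trivial and produces a main term $\asymp X/N(\mathfrak p_0)$, and summing $-\tfrac{2}{\log X}\sum_{\mathfrak p_0}\frac{\log N(\mathfrak p_0)}{N(\mathfrak p_0)}\widehat\phi\!\left(\frac{2\log N(\mathfrak p_0)}{\log X}\right)$ against the prime ideal theorem yields exactly $-\tfrac12\widehat\phi(0)$ wait — more precisely it combines with the conductor term to leave $\int_{\mathbb R}\phi$. I would track the normalization carefully so the right-hand side of \eqref{sexticdensity} comes out as stated.

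\textbf{Step 3: The error terms — the main obstacle.} The nonzero frequencies from Poisson summation produce Gauss sums $g(\cdot,\mathfrak p)$ of modulus $\sqrt{N(\mathfrak p)}$, and one is left bounding
\[
\frac{1}{\#C(X)\log X}\sum_{N(\mathfrak p)\ll X^{45/43}}\frac{\log N(\mathfrak p)}{\sqrt{N(\mathfrak p)}}\Bigl|\sum_{k\ne 0}\widehat{\Phi_X}\!\left(\cdots\right) g(k,\mathfrak p)\Bigr|,
\]
together with the contribution of the Möbius parameter $\ell$ (from squarefree sieving) which must be truncated at some $L\le X^{\theta}$ and whose tail is controlled by the second-moment bound of Blomer--Goldmakher--Louvel quoted after Theorem~\ref{firstmoment}. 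This is exactly where the support restriction $45/43$ enters: I expect to invoke a large sieve inequality for quadratic (or more precisely, the relevant sextic-to-quadratic) characters in $\mathbb{Z}[\omega]$ — controlling $\sum_{N(\mathfrak p)\le Q}\bigl|\sum_{c}a_c\chi^{(72c)}(\mathfrak p)\bigr|^2$ — to gain enough cancellation. Optimizing the lengths $Q \asymp X^{45/43}$ against $\#C(X)\asymp X$ forces the number $45/43$; pushing the support further would require either a stronger large sieve or a Lindelöf-on-average input beyond what is available. The archimedean/conductor bookkeeping (the conductor of $\chi^{(72c)}$ is $\asymp N(c)\asymp X$, hence $\log(\text{cond})/\log X \to 1$) and the replacement of $\Phi_X$ by its limit are routine given the stated properties $\Phi_X^{(j)}\ll_j U^j$ with $U=\log\log X$, which make the smoothing harmless in the limit.
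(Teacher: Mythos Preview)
Your skeleton (explicit formula, squarefree detection by M\"obius, Poisson summation over $\mathbb{Z}[\omega]$) matches the paper's, but the decisive Step~3 is misidentified. First, a minor point: $\chi^{(72c)}$ is a \emph{sextic} symbol, not a quadratic one, so after reciprocity one is summing $\leg{c}{\varpi}_6$ over $c$ and the Poisson dual involves the sextic Gauss sum $g_6(\varpi)$; there is no reduction to quadratic characters. More importantly, your plan to control the off-diagonal by a large sieve for sextic (or quadratic) characters is not what the paper does and would not yield the support $45/43$. After Poisson summation the problem is to bound
\[
\sum_{\substack{N(\varpi)\le Y\\ \varpi\ E\text{-primary}}}\overline{\leg{72^5km^5l^4}{\varpi}}_6\,\frac{g_6(\varpi)\Lambda(\varpi)}{\sqrt{N(\varpi)}},
\]
i.e.\ a sum of \emph{sextic Gauss sums over primes}. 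The paper handles this via the analytic continuation and polar behaviour of the Dirichlet series $h(r,s;\chi)=\sum_n \chi(n)g_6(r,n)N(n)^{-s}$ (Lemma~\ref{lem1}, which rests on Patterson's metaplectic theory \cite{P}), inserted into Patterson's sieve identity for primes. This produces the estimate
\[
E(x;m,k,l)\ll x^{\varepsilon}\bigl(N(km^5l^4)^{7/132}x^{59/66}+N(km^5l^4)^{1/44+\varepsilon}x^{1-1/22}\bigr),
\]
and after summing over $k$ and the M\"obius variables the worst term is $Y^{43/44+\varepsilon}/X^{45/44}$, which is $o(1)$ exactly when the support of $\hat\phi$ is contained in $(-45/43,45/43)$. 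A large sieve argument, by contrast, gains cancellation in the $c$-sum but gives no extra saving from the oscillation of $g_6(\varpi)$ as $\varpi$ varies; it would not push the support past $1$.

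In short: the explicit formula and Poisson steps are right, but you need Patterson's bounds on sextic Gauss sums at primes (via Lemma~\ref{lem1} and the sieve in \cite{P}), not a large sieve, to get the claimed support. The exponent $45/43$ is a direct artifact of the $x^{1-1/22}$ saving in that Gauss-sum estimate.
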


    The left-hand side of \eqref{sexticdensity} represents the one level density of low-lying zeros of the sextic family of Hecke $L$-functions in $\mq(\omega)$.  On the other hand, the right-hand side of \eqref{sexticdensity} shows that, in connection with the random matrix theory (see the discussions in \cite{G&Zhao2}), the family of sextic Hecke $L$-functions of $\mq(\omega)$ is a unitary family. \newline

   Using the argument in the proof of \cite[Corollary 1.4]{G&Zhao4}, we deduce readily a non-vanishing result for the family of sextic Hecke $L$-functions
under our consideration.
 \begin{corollary}
 Suppose that GRH is true for the Hecke $L$-functions $L(x, \chi^{(72c)})$ and that $1/2$ is a zero of $L \left( s, \chi^{(72c)} \right)$ of order $n_c \geq 0$.  As $X \to \infty$,
 \[  \sumstar_{(c, 6)=1} n_c \Phi_X \left( \frac {N(c)}{X} \right) \leq  \left( \frac{43}{45} + o(1) \right) \# C(X). \]
   Moreover, as $X \to \infty$
\[  \# \{ c \in C(X) : L\left( \frac{1}{2}, \chi^{(72c)} \right) \neq 0 \} \geq \left( \frac{2}{45} + o(1) \right) \# C(X) . \]
    \end{corollary}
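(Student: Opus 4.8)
The plan is to derive this corollary from Theorem~\ref{sexticmainthm} by feeding an extremal nonnegative test function into the one level density formula \eqref{sexticdensity}, following the argument behind \cite[Corollary 1.4]{G&Zhao4}. First I would fix a parameter $\nu < 45/43$ and take $\phi$ to be the Fej\'er kernel normalized so that $\hat{\phi}(u) = (1 - |u|/\nu)$ for $|u| \le \nu$ and $\hat{\phi}(u) = 0$ otherwise. Then $\hat{\phi}$ is supported in $(-45/43, 45/43)$, $\phi$ is even and Schwartz, $\phi \ge 0$ on $\mr$, and $\phi(0) = \int_{\mr} \hat{\phi}(u)\,\dif u = \nu$ while $\int_{\mr} \phi(x)\,\dif x = \hat{\phi}(0) = 1$.

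The next step would be to use positivity of $\phi$. Under GRH every $\tilde{\gamma}_{\chi^{(72c)}, j}$ is real, so each term in $S(\chi^{(72c)}, \phi)$ is nonnegative; keeping only the $n_c$ terms corresponding to zeros at the central point yields the pointwise bound $S(\chi^{(72c)}, \phi) \ge n_c\, \phi(0) = \nu\, n_c$. Multiplying by $\Phi_X(N(c)/X) \ge 0$, summing over squarefree $c$ with $(c,6) = 1$, and invoking \eqref{sexticdensity}, I would obtain
\[
  \nu \sumstar_{(c,6)=1} n_c\, \Phi_X\!\left( \frac{N(c)}{X} \right) \le \sumstar_{(c,6)=1} S(\chi^{(72c)}, \phi)\, \Phi_X\!\left( \frac{N(c)}{X} \right) = \bigl( 1 + o(1) \bigr) \# C(X).
\]
Dividing by $\nu$ and letting $\nu \to 45/43$ then gives the first displayed inequality of the corollary.

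For the non-vanishing count I would first record that, since $0 \le \Phi_X \le 1$ and $\Phi_X \equiv 1$ on $(1 + 1/U, 2 - 1/U)$ with $U = \log\log X$, the weight $\Phi_X(N(c)/X)$ fails to equal $1$ only for $c$ with $N(c)$ lying in two boundary intervals of total length $O(X/U)$; together with $\# C(X) \asymp X$ and a routine count of squarefree elements of $\mz[\omega]$ coprime to $6$, this gives $\sumstar_{(c,6)=1} \Phi_X(N(c)/X) = (1 + o(1))\# C(X)$. Since $n_c \ge 1$ exactly when $L(1/2, \chi^{(72c)}) = 0$, I would then estimate
\[
  \#\{ c \in C(X) : L(\tfrac12, \chi^{(72c)}) \ne 0 \} \ge \sumstar_{(c,6)=1} \Phi_X\!\left( \frac{N(c)}{X} \right) - \sumstar_{(c,6)=1} n_c\, \Phi_X\!\left( \frac{N(c)}{X} \right) \ge \left( \frac{2}{45} + o(1) \right) \# C(X),
\]
using the bound just proved for $\sum n_c \Phi_X$; this is the asserted lower bound.

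The substantive input is Theorem~\ref{sexticmainthm}; the rest is routine, and I do not expect a real obstacle beyond bookkeeping of the $o(1)$ terms through the limit $\nu \to 45/43$ and the boundary adjustment of the $\Phi_X$-weight. The one point deserving a remark is the optimality of the choice of $\phi$: among nonnegative $\phi$ with $\operatorname{supp} \hat{\phi} \subseteq (-\nu, \nu)$, the Fej\'er kernel minimizes $\int_{\mr} \phi / \phi(0)$ --- as one sees by writing $\phi = |h|^2$ with $\hat{h}$ supported in $(-\nu/2, \nu/2)$ via the Fej\'er--Riesz factorization and applying Cauchy--Schwarz --- so that the support bound $45/43$ in Theorem~\ref{sexticmainthm} is converted into the constants $43/45$ and $2/45$ with no loss.
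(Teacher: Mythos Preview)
Your proposal is correct and follows exactly the argument the paper invokes (it simply cites \cite[Corollary~1.4]{G\&Zhao4} without spelling out the details). One small slip: the Fej\'er kernel is not Schwartz---it decays only like $x^{-2}$---but this is harmless, since you are already taking a limit $\nu \to 45/43$ and can work instead with genuinely Schwartz nonnegative $\phi$ having $\operatorname{supp}\hat\phi \subset (-\nu,\nu)$ and $\phi(0)\big/\!\int\phi$ arbitrarily close to $\nu$.
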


  We note that, analogue to the situation in $\mq(\omega)$, the authors previously studied in \cite{G&Zhao4, G&Zhao9} the one level densities of the low-lying zeros of families of quadratic Hecke $L$-functions in all imaginary quadratic number fields of class number one as well as a family of quartic Hecke $L$-functions in $\mq(i)$. Our proof of Theorem \ref{sexticmainthm} is similar to those in \cite{Gu, DG} and \cite{G&Zhao4}. A two dimensional Poisson summation over $\mz(\omega)$  is used to treat the character sums over the primes, then we use essentially a result derived from the theorem of S. J. Patterson in \cite{P} to estimate certain sextic Gauss sums at the primes.

\subsection{Notations} The following notations and conventions are used throughout the paper.\\
\noindent We write $\Phi(t)$ for $\Phi_X(t)$. \newline
\noindent $e(z) = \exp (2 \pi i z) = e^{2 \pi i z}$. \newline
$f =O(g)$ or $f \ll g$ means $|f| \leq cg$ for some unspecified
positive constant $c$. \newline
$f =o(g)$ means $\lim_{x \to \infty}f(x)/g(x)=0$. \newline
$\mu_{[\omega]}$ denotes the M\"obius function on $\mz[\omega]$. \newline
$\zeta_{\mq(\omega)}(s)$ stands for the Dedekind zeta function of $\mq(\omega)$.

\section{Preliminaries}
\label{sec 2}
%%----------------------------------------------------------------------------
\subsection{Residue symbols and Kronecker symbol}
\label{sec2.4}
%%----------------------------------------------------------------------------
   It is well-known that $K=\mq(\omega)$ has class number $1$. For $j=2,3,6$, the symbol $(\frac{\cdot}{n})_j$ is the quadratic ($j=2$), cubic ($j=3$) and
   sextic ($j=6$) residue symbol in the ring of integers $\mathcal{O}_K =\mz[\omega]$.  For a prime $\varpi \in \mz[\omega], (\varpi, j)=1$, we define for
   $a \in \mz[\omega]$, $(a, \varpi)=1$ by $\leg{a}{\varpi}_j \equiv
a^{(N(\varpi)-1)/j} \pmod{\varpi}$, with $\leg{a}{\varpi}_j \in <(-\omega)^{6/j}>$, the cyclic group of order $j$ generated by $(-\omega)^{6/j}$. When
$\varpi | a$, we define
$\leg{a}{\varpi}_j =0$.  Then these symbols can be extended
to any composite $n$ with $(N(n), j)=1$ multiplicatively. We further define $\leg {\cdot}{n}_j=1$ when $n$ is a unit in $\mz[\omega]$. We note that we have
$\leg {\cdot}{n}^2_6=\leg {\cdot}{n}_3, \leg {\cdot}{n}^3_6=\leg {\cdot}{n}_2$ whenever $\leg {\cdot}{n}_6$ is defined. \newline

 We say that
$n=a+b\omega$ in $\mz[\omega]$ is primary if $n \equiv \pm 1
\pmod{3}$, which is equivalent to $a \not \equiv 0 \pmod{3}$, and $b \equiv
0 \pmod{3}$ (see \cite[p. 209]{B&Y}). \newline

   Recall that (see \cite[p. 883]{B&Y}) the following cubic reciprocity law holds for two co-prime primary $n, m$ :
\begin{align*}
%%\label{cubicrec}
    \leg {n}{m}_3 =\leg{m}{n}_3.
\end{align*}

  We also have the following supplementary laws for a primary $n=a+b\omega, n \equiv 1 \pmod 3$ (see \cite[Theorem 7.8]{Lemmermeyer}),
\begin{align} \label{cubicsupp}
  \leg {\omega}{n}_3=\omega^{(1-a-b)/3} \qquad \mbox{and} \qquad \leg {1-\omega}{n}_3=\omega^{(a-1)/3}.
\end{align}

  Following the notations in \cite[Section 7.3]{Lemmermeyer}, we say that any primary $n=a+b\omega \in \mz[\omega]$, with $(n, 6)=1$ is $E$-primary if
\begin{align*}
%%\label{cubicE}
   & a+b \equiv 1 \pmod 4, \quad  \text{if} \quad 2 | b,  \\
   & b \equiv 1 \pmod 4, \quad  \text{if} \quad 2 | a,  \nonumber \\
   & a \equiv 3 \pmod 4, \quad  \text{if} \quad 2 \nmid ab.  \nonumber
\end{align*}

    It follows from \cite[Lemma 7.9]{Lemmermeyer} that $n$ is $E$-primary if and only if $n^3=c+d\omega$ with $c, d \in \mz$ such that $6 | d$ and $c+d \equiv 1 \pmod 4$. This implies that products of $E$-primary numbers are again $E$-primary. Note that in $\intz[\omega]$, every ideal co-prime to $6$ has a unique $E$-primary generator.
     Furthermore, the following sextic reciprocity law holds for two $E$-primary, co-prime numbers $n, m \in \mz[\omega]$ :
\begin{align*}
%%\label{quadreciQw}
    \leg {n}{m}_6 =\leg{m}{n}_6(-1)^{((N(n)-1)/2)((N(m)-1)/2)}.
\end{align*}

   We also have the following supplementary laws for an $E$-primary $n=a+b\omega$ with $(n,6)=1$  (see \cite[Theorem 7.10]{Lemmermeyer}):
\begin{align}
\label{2.05}
  \leg {-\omega}{n}_6=(-\omega)^{(N(n)-1)/6}, \qquad \leg {1-\omega}{n}_2=\leg {a}{3}_{\mz} \qquad \mbox{and} \qquad  \hspace{0.1in} \leg {2}{n}_2=\leg
  {2}{N(n)}_{\mz},
\end{align}
   where $\leg {\cdot}{\cdot}_{\mz}$ denotes the Jacobi symbol in $\mz$. \newline

The above discussions allow us to define a sextic Dirichlet character $\chi^{(72c)} \pmod {72c}$ for any element $c \in \mathcal{O}_K, (c, 6)=1$, such that
for any $n \in (\mathcal{O}_K/72c\mathcal{O}_K)^*$,
\begin{align*}
   \chi^{(72c)}(n)=\leg {72c}{n}_6.
\end{align*}

    One deduces from \eqref{cubicsupp}, \eqref{2.05} and the sextic reciprocity that  $\chi^{(72c)}(n)=1$ when $n \equiv 1 \pmod {72c}$. It follows from this that
    $\chi^{(72c)}(n)$ is well-defined.  As $\chi^{(72c)}(n)$ is clearly multiplicative, of order $6$ and trivial on units, it can be regarded as a
    sextic Hecke character modulo $72c$ of trivial infinite type. We write $\chi^{(72c)}$ for this Hecke character as well and we call it the
    Kronecker symbol. Furthermore, if $c$ is square-free, $\chi^{(72c)}$ is non-principal and primitive. To see this, we write $c=u_c \cdot \varpi_1
    \cdots \varpi_k$ with a unit $u_c$ and primes $\varpi_j$. Suppose $\chi^{(72c)}$ is induced by some $\chi$ modulo $c'$ with $\varpi_j \nmid
    c'$, then by the Chinese Remainder Theorem, there exists an $n$ such that $n \equiv 1 \pmod {72c/\varpi_j}$ and $\leg {\varpi_j}{n}_6 \neq 1$. It
    follows from this that $\chi(n)=1$ but $\chi^{(72c)}(n) \neq 1$, a contradiction. Thus, $\chi^{(72c)}$ can only be possibly
    induced by some $\chi$ modulo $36c$ or modulo $8(1-\omega)^3c$. Suppose it is induced by some $\chi$
    modulo $36c$, then by the Chinese Remainder Theorem, there exists an $n$ such that $n \equiv 1 \pmod {9c}$ and $n \equiv 1+4\omega \pmod {8}$, then we have
    $\chi(n)=1$ but $\chi^{(72c)}(n)= \leg {2}{n}_2 \neq 1$ by \eqref{2.05}, a contradiction. Suppose it is induced by some $\chi$
    modulo $8(1-\omega)^3c$, then by the Chinese Remainder Theorem, there exists an $n$ such that $n \equiv 1 \pmod {8c}$ and $n \equiv 1+3(1-\omega) \pmod
    {9}$, then we have
    $\chi(n)=1$ but
    \[ \chi^{(72c)}(n)= \leg {\omega(1-\omega)}{n}^{-1}_3 \neq 1 \]
by \eqref{cubicsupp} (note that we have $3=-\omega^2(1-\omega)^2$), a contradiction. This implies that $\chi^{(72c)}$ is primitive. This also shows that $\chi^{(-72c)}$ is non-principal.

\subsection{Gauss sums}
\label{section:Gauss}

    Suppose that $n \in \mz[\omega]$ with $(n,6)=1$. For $j=2,3,6$, the quadratic ($j=2$), cubic ($j=3$) and sextic ($j=6$)
 Gauss sum is defined by
\begin{equation*}
   g_j(n)=\sum_{x \shortmod{n}} \leg{x}{n}_j \widetilde{e}\leg{x}{n},
\end{equation*}
   where $\widetilde{e}(z) =e \left( (z-\overline{z})/\sqrt{-3} \right)$. \newline

We can infer from its definition that $g_j(1)=1$.  Moreover, the following well-known relation (see \cite[p. 195]{P}) holds for all $n$:
\begin{align}
\label{2.1}
   |g_j(n)|& =\begin{cases}
    \sqrt{N(n)} \qquad & \text{if $n$ is square-free}, \\
     0 \qquad & \text{otherwise}.
    \end{cases}
\end{align}

    More generally, for any $n,r \in \mz[\omega]$, with $(n,6)=1$, we set
\begin{align*}
%% \label{g2g4}
 g_j(r,n) = \sum_{x \bmod{n}} \leg{x}{n}_6 \widetilde{e}\leg{rx}{n}.
\end{align*}

  We need the following properties of $g_j(r,n)$:
\begin{lemma} \label{quarticGausssum}
   For $j=2,3,6$, any prime $\varpi$ satisfying $(\varpi, 6)=1$, we have
\begin{align}
\label{eq:gmult}
 g_j(rs,n) & = \overline{\leg{s}{n}}_j g(r,n), \quad (s,n)=1, \\
\label{2.03}
   g_j(r,n_1 n_2) &=\leg{n_2}{n_1}_j\leg{n_1}{n_2}_jg_j(r, n_1) g_j(r, n_2), \quad (n_1, n_2) = 1, \\
\label{2.04}
g_6(\varpi^k, \varpi^l)& =\begin{cases}
    N(\varpi)^kg_6(\varpi) \qquad & \text{if} \qquad l= k+1, k \equiv 0 \pmod {6},\\
    N(\varpi)^kg_3(\varpi) \qquad & \text{if} \qquad l= k+1, k \equiv 1 \pmod {6},\\
    N(\varpi)^kg_2(\varpi) \qquad & \text{if} \qquad l= k+1, k \equiv 2 \pmod {6},\\
    N(\varpi)^k\leg{-1}{\varpi}_3\overline{g_3}(\varpi) \qquad & \text{if} \qquad l= k+1, k \equiv 3 \pmod {6},\\
    N(\varpi)^k\leg{-1}{\varpi}_6\overline{g_6}(\varpi) \qquad & \text{if} \qquad l= k+1, k \equiv 4 \pmod {6},\\
    -N(\varpi)^k, \qquad & \text{if} \qquad l= k+1, k \equiv 5 \pmod {6},\\
      \varphi(\varpi^l)=\#(\mz[\omega]/(\varpi^l))^* \qquad & \text{if} \qquad  k \geq l, l \equiv 0 \pmod {6},\\
      0 \qquad & \text{otherwise}.
\end{cases}
\end{align}
\end{lemma}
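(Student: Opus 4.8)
The plan is to establish the three identities in turn, using throughout two elementary properties of $\widetilde{e}$: it is additive, $\widetilde{e}(z+w)=\widetilde{e}(z)\widetilde{e}(w)$, and it is trivial on $\mz[\omega]$ (for $a+b\omega\in\mz[\omega]$ one has $(a+b\omega)-\overline{(a+b\omega)}=b\sqrt{-3}$, so $\widetilde{e}(a+b\omega)=e(b)=1$). Consequently $\widetilde{e}(\,\cdot\,/\varpi^m)$ is a well-defined non-trivial additive character of $\mz[\omega]/\varpi^m$ for every $m\geq 1$, and in particular $\sum_{x\bmod\varpi^m}\widetilde{e}(x/\varpi^m)=0$. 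I also use that the residue symbols are multiplicative in each argument and take values in $\mu_j$ (so $\overline{\leg{a}{n}_j}=\leg{a}{n}_j^{-1}$), and that $\overline{\widetilde{e}(z)}=\widetilde{e}(-z)$. With these in hand, \eqref{eq:gmult} is immediate: since $(s,n)=1$, multiplication by the inverse of $s$ modulo $n$ permutes the residues mod $n$, and making this substitution in $g_j(rs,n)=\sum_{x\bmod n}\leg{x}{n}_6\widetilde{e}(rsx/n)$ pulls the factor $\overline{\leg{s}{n}}_j$ out of the residue symbol. For \eqref{2.03}, since $(n_1,n_2)=1$ every $x\bmod n_1n_2$ is uniquely of the form $x=n_2a+n_1b$ with $a\bmod n_1$ and $b\bmod n_2$; then $\leg{x}{n_1n_2}_j=\leg{x}{n_1}_j\leg{x}{n_2}_j=\leg{n_2}{n_1}_j\leg{a}{n_1}_j\leg{n_1}{n_2}_j\leg{b}{n_2}_j$ while, by additivity, $\widetilde{e}(rx/(n_1n_2))=\widetilde{e}(ra/n_1)\widetilde{e}(rb/n_2)$, so the double sum factors into the asserted product.

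For \eqref{2.04} I would first reduce to the prime level. In $g_6(\varpi^k,\varpi^l)=\sum_{x\bmod\varpi^l}\leg{x}{\varpi}_6^{l}\widetilde{e}(\varpi^k x/\varpi^l)$ the summand depends on $x$ only through $x\bmod\varpi$ in the residue symbol and, when $k<l$, through $x\bmod\varpi^{l-k}$ in $\widetilde{e}$; writing $m=l-k$ and grouping residues gives $g_6(\varpi^k,\varpi^l)=N(\varpi)^k\sum_{x\bmod\varpi^m}\leg{x}{\varpi}_6^{l}\widetilde{e}(x/\varpi^m)$. When $m\geq 2$, splitting $x=y+\varpi z$ with $y\bmod\varpi$ and $z\bmod\varpi^{m-1}$ factors off $\sum_{z\bmod\varpi^{m-1}}\widetilde{e}(z/\varpi^{m-1})=0$, so $g_6(\varpi^k,\varpi^l)=0$ whenever $l\geq k+2$. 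When $k\geq l$ one has $\widetilde{e}(\varpi^k x/\varpi^l)=\widetilde{e}(\varpi^{k-l}x)=1$, so $g_6(\varpi^k,\varpi^l)=\sum_{x\bmod\varpi^l}\leg{x}{\varpi}_6^{l}$, which is $\varphi(\varpi^l)$ if $6\mid l$ and $0$ otherwise, since $\leg{\cdot}{\varpi}_6$ restricts to a character of exact order $6$ on the cyclic group $(\mz[\omega]/\varpi)^*$ (using $6\mid N(\varpi)-1$). Finally, for $l=k+1$ the reduction yields $g_6(\varpi^k,\varpi^{k+1})=N(\varpi)^k\sum_{y\bmod\varpi}\leg{y}{\varpi}_6^{k+1}\widetilde{e}(y/\varpi)$, and I would evaluate the inner sum according to the class of $k+1$ modulo $6$: for $k+1\equiv 1,2,3$ the symbol $\leg{\cdot}{\varpi}_6^{k+1}$ equals $\leg{\cdot}{\varpi}_6$, $\leg{\cdot}{\varpi}_3$, $\leg{\cdot}{\varpi}_2$ respectively, producing $g_6(\varpi)$, $g_3(\varpi)$, $g_2(\varpi)$; for $k+1\equiv 4,5$ it equals $\overline{\leg{\cdot}{\varpi}_3}$, $\overline{\leg{\cdot}{\varpi}_6}$, and the substitution $y\mapsto -y$ together with $\overline{\widetilde{e}(z)}=\widetilde{e}(-z)$ turns the inner sum into $\leg{-1}{\varpi}_3\overline{g_3}(\varpi)$ and $\leg{-1}{\varpi}_6\overline{g_6}(\varpi)$; and for $k+1\equiv 0$ the symbol is trivial on $(\mz[\omega]/\varpi)^*$, so the inner sum equals $\sum_{y\bmod\varpi}\widetilde{e}(y/\varpi)-1=-1$. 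Assembling the cases gives \eqref{2.04}.

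The changes of variables make \eqref{eq:gmult} and \eqref{2.03} routine; the only part needing genuine care is \eqref{2.04}, above all tracking the complex conjugations and the factors $\leg{-1}{\varpi}_j$ in the case $l=k+1$ and confirming the vanishing throughout the range $l\geq k+2$. I expect that bookkeeping — elementary but fiddly — to be the main obstacle.
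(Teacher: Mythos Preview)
Your argument is correct and follows essentially the same route as the paper: both of you dispose of \eqref{eq:gmult} and \eqref{2.03} as immediate from the definition, and for \eqref{2.04} both of you handle $k\ge l$ directly, reduce the case $l>k$ by writing $x$ in terms of its residue mod $\varpi$ and a complementary part so as to factor off an additive character sum that vanishes when $l\ge k+2$, and then read off the $l=k+1$ values from $N(\varpi)^k\sum_{b\bmod\varpi}\leg{b}{\varpi}_6^{\,k+1}\widetilde{e}(b/\varpi)$. The only cosmetic differences are that you first collapse the sum to residues mod $\varpi^{l-k}$ before peeling off $\varpi$, whereas the paper peels off $\varpi$ first and then further decomposes, and that you spell out the six residue classes of $k+1$ (including the $y\mapsto -y$ substitution producing the $\leg{-1}{\varpi}_j$ factors) where the paper simply appeals to the definitions of $g_j(\varpi)$.
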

\begin{proof}
Both \eqref{eq:gmult} and \eqref{2.03} follow easily from the definition. For \eqref{2.04}, the case $l \leq k$ is easily verified.  If $l > k$, then
\begin{align}
\label{2.8}
    \sum_{a \bmod {\varpi^{l}}}\leg {a}{\varpi^l}_6\widetilde{e}\left(\frac {\varpi^ka}{\varpi^l}\right) & =  \sum_{b \bmod \varpi}\leg
    {b}{\varpi^l}_6\sum_{c
    \bmod {\varpi^{l-1}}}\widetilde{e}\left(\frac {c\varpi+b}{\varpi^{l-k}}\right) \\
    &=\sum_{b \bmod \varpi}\leg {b}{\varpi^l}_6\widetilde{e}\left(\frac {b}{\varpi^{l-k}}\right)\sum_{c
    \bmod {\varpi^{l-1}}}\widetilde{e}\left(\frac {c\varpi}{\varpi^{l-k}}\right).  \nonumber
\end{align}

If $l \geq k+2$, we write $c = c_1 \varpi^{l-k-1} + c_2$ where $c_1$ varies over a set of
representatives in $\mz[\omega] \pmod{\varpi^k}$ and $c_2$ varies over a set
of representatives in $\mz[\omega] \pmod{\varpi^{l-k-1}}$ to see that
\begin{align*}
   \sum_{c \bmod {\varpi^{l-1}}}\widetilde{e}\left(\frac {c}{\varpi^{l-k-1}}\right)
   = N(\varpi^k)
   \sum_{c_2 \bmod {\varpi^{l-k-1}}}\widetilde{e} \left( \frac{c_2}{\varpi^{l-k-1}} \right)=0,
\end{align*}
    where the last equality follows from \cite[Lemma, p. 197]{He}. This proves the last case when $l \geq k+2$. \newline

It thus remains to deal with the case in which $l=k+1$.  In this case, the right-hand side expression of \eqref{2.8} is
\begin{align*}
    N(\varpi)^{l-1}\sum_{b \bmod \varpi}\leg {b}{\varpi^l}_6\widetilde{e}\left(\frac {b}{\varpi}\right).
\end{align*}
   The expressions in \eqref{2.04} for $g_6(\varpi^k, \varpi^l)$ follow from this, taking into account the definitions of $g_j(\varpi)$ for $j=2, 3$ and $6$. This
   completes the proof of the lemma.
\end{proof}

\subsection{The approximate functional equation}

      Let $c \in \mathcal{O}_K, c \equiv 1 \pmod {36}$ be square-free. It follows from \eqref{2.05} that
$\chi_c =\leg {\cdot}{c}_6$ is trivial on units, it can be regarded as a primitive Hecke character $\pmod {c}$ of trivial infinite type.
The Hecke $L$-function associated with
$\chi_c$ is defined for $\Re(s) > 1$ by
\begin{equation*}
  L(s, \chi_c) = \sum_{0 \neq \mathcal{A} \subset
  \mathcal{O}_K}\chi_c(\mathcal{A})(N(\mathcal{A}))^{-s},
\end{equation*}
  where $\mathcal{A}$ runs over all non-zero integral ideals in $K$ and $N(\mathcal{A})$ is the
norm of $\mathcal{A}$. As shown by E. Hecke, $L(s, \chi_c)$ admits
analytic continuation to an entire function and satisfies a
functional equation.  We refer the reader to \cites{Gu, Luo, G&Zhao1, G&Zhao20} for a more detailed discussion of these Hecke characters and $L$-functions.  \newline

   Let $G(s)$ be any even function which is holomorphic and bounded in the
   strip $-4<\Re(s)<4$ satisfying $G(0)=1$. We have the following expression for $L(1/2+it, \chi_c)$ for $t \in \rear$ (see \cite[Section 2.4]{G&Zhao20}):
\begin{equation} \label{approxfuneq}
\begin{split}
 L \left( \frac{1}{2}+it, \chi_c \right) = \sum_{0 \neq \mathcal{A} \subset
  \mathcal{O}_K} & \frac{\chi_c(\mathcal{A})}{N(\mathcal{A})^{1/2+it}}V_t \left(\frac{2\pi  N(\mathcal{A})}{x} \right) \\
  & + \frac{g_6(c)}{N(c)^{1/2}}\left(\frac {(2\pi)^2}{|D_k|N(c)} \right )^{it} \frac {\Gamma (1/2-it)}{\Gamma (1/2+it)}\sum_{0 \neq \mathcal{A} \subset
  \mathcal{O}_K}\frac{\overline{\chi}_c(\mathcal{A})}{N(\mathcal{A})^{1/2-it}}V_{-t}\left(\frac{2\pi
  N(\mathcal{A})x}{|D_K|N(c)} \right),
     \end{split}
\end{equation}
    where $g_6(c)$ is the Gauss sum defined in
   Section \ref{sec2.4}, $D_K=-3$ is the discriminant of $K$ and
\begin{align*}
%%\label{2.14}
  V_t \left(\xi \right)=\frac {1}{2\pi
   i}\int\limits\limits_{(2)}\frac {\Gamma(s+1/2+it)}{\Gamma (1/2+it)}G(s)\frac
   {\xi^{-s}}{s} \ \dif s.
\end{align*}

    We write $V$ for $V_0$ and note that for a suitable $G(s)$ (for example $G(s)=e^{s^2}$), we have for any $c>0$ (see \cite[Proposition 5.4]{HIEK}):
\begin{align*}
%%\label{2.15}
  V_t \left(\xi \right) \ll \left( 1+\frac{\xi}{1+|t|} \right)^{-c}.
\end{align*}

   On the other hand, when $G(s)=1$, we have (see \cite[Lemma 2.1]{sound1}) for the $j$-th derivative of $V(\xi)$,
\begin{equation} \label{2.07}
      V\left (\xi \right) = 1+O(\xi^{1/2-\epsilon}) \; \mbox{for} \; 0<\xi<1   \quad \mbox{and} \quad V^{(j)}\left (\xi \right) =O(e^{-\xi}) \; \mbox{for}
      \; \xi >0, \; j \geq 0.
\end{equation}

\subsection{Analytic behavior of Dirichlet series associated with sextic Gauss sums}
\label{section: smooth Gauss}
    For any Hecke character $\chi \pmod {36}$ of trivial infinite type, we let
\begin{align}
\label{h}
   h(r,s;\chi)=\sum_{\substack{(n,r)=1 \\ n \text{ $E$-primary} }}\frac {\chi(n)g_6(r,n)}{N(n)^s}.
\end{align}

    The following lemma gives the analytic behavior of $h(r,s;\chi)$ on $\Re(s) >1$.
 \begin{lemma}
\label{lem1} Let $r$ be $E$-primary. The function $h(r,s;\chi)$ has meromorphic continuation to the complex plane. It is holomorphic in the
region $\sigma=\Re(s) > 1$ except possibly for a pole at $s = 7/6$. For any $\varepsilon>0$, letting $\sigma_1 = 3/2+\varepsilon$, then for $\sigma_1 \geq \sigma \geq \sigma_1- 1/2$, $|s- 7/6|>1/12$ and we have
\begin{equation*}
  h(r,s;\chi) \ll N(r)^{(\sigma_1-\sigma+\varepsilon)/2}(1+t^2)^{5(\sigma_1-\sigma+\varepsilon)/2},
\end{equation*}
  where $t=\Im(s)$. Moreover, the residue satisfies the bound
\begin{equation*}
  \text{Res}_{s=7/6} h(r,s;\chi) \ll N(r)^{1/6+\varepsilon}.
\end{equation*}
\end{lemma}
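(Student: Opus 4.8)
The plan is to express $h(r,s;\chi)$ in terms of a Dirichlet series built purely from Gauss sums $g_6(\varpi)$ at primes, and then to invoke Patterson's analysis of such series. First I would use the multiplicativity properties from Lemma~\ref{quarticGausssum}, namely \eqref{eq:gmult}, \eqref{2.03} and \eqref{2.04}, to write the sum over $E$-primary $n$ coprime to $r$ as an Euler product. Writing $n = \prod \varpi^{a_\varpi}$, formula \eqref{2.04} shows $g_6(r,\varpi^a)$ vanishes unless $a\le 1$ or $6\mid a$, and for $6\mid a$ it contributes $\varphi(\varpi^a)\asymp N(\varpi)^a$ while for $a=1$ it contributes $g_6(\varpi)$ (or a conjugate/cubic/quadratic variant depending on $r$'s valuation at $\varpi$). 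Thus the "interesting" part of $h(r,s;\chi)$, after pulling out a factor that is essentially $\zeta_{\mq(\omega)}(6s-5)$-like coming from the sixth-power divisors, is a series $\sum_{\varpi \nmid r} \chi(\varpi)g_6(\varpi)N(\varpi)^{-s}$ twisted by the local factors at $\varpi\mid r$.

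Next I would compare this with the cubic/sextic analogue of the Kubota–Patterson Dirichlet series $\psi(s) = \sum_{n} \frac{g_6(n)}{N(n)^s}$ studied in \cite{P}. Patterson's theorem gives the meromorphic continuation of such a series and locates its only pole; after the shift that comes from the definition $\widetilde{e}$ and the normalization $|g_6(n)| = \sqrt{N(n)}$ for squarefree $n$ from \eqref{2.1}, the pole sits at $\Re(s) = 7/6$ (this is the $s$ for which $g_6(n)/N(n)^s$ has the sum of the trivial character weight $1$ plus the half-integer $1/2$ from the size of the Gauss sum, i.e. $1 + 1/6$ after accounting for the sixth-power structure). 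The twisting by a fixed Hecke character $\chi\pmod{36}$ and by the finitely many local factors at $\varpi\mid r$ does not affect the continuation; it only changes the residue and multiplies by a finite Euler product whose size is controlled by $N(r)$. This gives the claimed holomorphy in $\sigma>1$ apart from the possible pole at $s=7/6$, and the residue bound $\mathrm{Res}_{s=7/6}h(r,s;\chi)\ll N(r)^{1/6+\varepsilon}$ will come from tracking the $N(r)$-dependence of those local factors together with the divisor bound $N(r)^{\varepsilon}$ for the number of prime-power divisors.

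For the convexity-type estimate in the strip $\sigma_1 - 1/2 \le \sigma \le \sigma_1$ with $\sigma_1 = 3/2+\varepsilon$, I would proceed as usual: at $\sigma = \sigma_1 = 3/2+\varepsilon$ the series \eqref{h} converges absolutely (using $|g_6(r,n)|\ll N(n)^{1/2}N(r)^{\varepsilon}$ from \eqref{2.1}, \eqref{eq:gmult}, \eqref{2.04}) and is $\ll N(r)^{\varepsilon}$; near $\sigma=\sigma_1-1/2 = 1+\varepsilon$ one uses the functional equation provided by Patterson's work (or an approximate functional equation for the Gauss-sum Dirichlet series) to bound $h$, picking up the $N(r)^{(\sigma_1-\sigma)/2}$ and $(1+t^2)^{5(\sigma_1-\sigma)/2}$ factors from the conductor and the archimedean gamma factors of that functional equation, and then I would interpolate by the Phragmén–Lindelöf principle (applied to $h(r,s;\chi)(s-7/6)$ to kill the pole) across the strip, staying at distance $>1/12$ from $s=7/6$.

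The main obstacle I anticipate is not the analytic continuation itself — that is essentially Patterson's theorem — but rather making the dependence on $r$ completely explicit and uniform. One must carefully unwind \eqref{2.03} and \eqref{2.04} at the primes dividing $r$ (where the relevant Gauss sum is $g_6(\varpi^k,\varpi^l)$ with $k=v_\varpi(r)\ge 1$), keep track of the reciprocity factors $\leg{n_2}{n_1}_6\leg{n_1}{n_2}_6$, and verify that the resulting "correction" series in the variables attached to $r$ still enjoys the same continuation with a pole only at $7/6$ and contributes at most $N(r)^{(\sigma_1-\sigma+\varepsilon)/2}$ to the bound and $N(r)^{1/6+\varepsilon}$ to the residue. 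This bookkeeping, rather than any deep new input, is where the real work lies; the growth in $t$ is comparatively routine once the functional equation's gamma factors are identified.
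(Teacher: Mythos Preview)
Your plan matches the paper's: reduce to Patterson's results in \cite{P} for the continuation, pole at $s=7/6$, and convexity bound, and handle the $r$-dependence through the Gauss-sum identities of Lemma~\ref{quarticGausssum}; the paper packages that bookkeeping precisely as Lemma~\ref{lem2} (which removes the condition $(n,r)=1$), together with the observation $|g_6(\varpi^k,\varpi^l)|\le N(\varpi)^{k+1/2}$ for $k<l$. One correction to your local analysis: since $(n,r)=1$ in \eqref{h}, applying \eqref{eq:gmult} at each $\varpi\mid n$ reduces the local factor to $g_6(\varpi^0,\varpi^a)$, and \eqref{2.04} with $k=0$ gives $0$ for every $a\ge 2$ (the $\varphi(\varpi^l)$ case there requires $k\ge l$, which fails), so only squarefree $n$ contribute and there is no $\zeta_{\mq(\omega)}(6s-5)$-type factor to pull out; relatedly, the reciprocity factor in \eqref{2.03} means the series is not a classical Euler product, though this twisted multiplicativity is exactly what Patterson's framework handles.
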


   Lemma \ref{lem1} is shown in the same manner as that of \cite[Lemma 2.5]{G&Zhao1}. We use Lemma~\ref{lem2} to remove the condition $(n,r)=1$ in \eqref{h}, and the result of Lemma~\ref{lem1} then follows from the proof of the Lemma on \cite[p. 200]{P}, taking into account the observation from \eqref{2.04} that $|g_6(\varpi^k, \varpi^l)| \leq N(\varpi)^{k+1/2}$ when $k < l$ for any $E$-primary prime $\varpi$. \newline

   To state the next lemma, we define for $a, c \in \mz[\omega], (ac, 6)=1$,
\begin{align*}
   \psi_a(c)=(-1)^{((N(a)-1)/2)((N(c)-1)/2)}=\leg {(-1)^{((N(a)-1)/2)}}{c}^3_6.
\end{align*}
   It is easy to see that $\psi_a(c)$ is a Hecke character $\pmod {36}$ of trivial infinite type. \newline

   Furthermore, for any square-free $a \in
   \mz[\omega]$, we let $\{\varpi_1, \cdots, \varpi_k \}$ be the set of distinct $E$-primary prime divisors of $a$ and we define for $1 \leq j \leq 4$,
\begin{align*}
  P_j(a)=\prod^{k}_{i=1}\overline{\leg{(a/\prod^{i}_{l=1}\varpi_l)^j}{\varpi^{j+1}_i}_6}, \quad
\Psi_j(a)=\prod^{k-1}_{i=1}\psi_{\prod^i_{l=1}\varpi^{j+1}_l}(\varpi_{i+1}),
\end{align*}
   where we set the empty product to be $1$. In particular, we have $P_j(a)=1$ when $a$ is a unit and $\Psi_j(a)=1$ when $a$ is a unit or a prime.
   As $\leg{\varpi^j_2}{\varpi^{j+1}_1}_6=\leg{\varpi_2}{\varpi_1}^{j(j+1)}_6$ for two distinct $E$-primary
   primes $\varpi_1, \varpi_2$, one checks easily by induction on the number of prime divisors of $a$ and the sextic reciprocity that $P_j(a)$ is independent of the order of
   $\{\varpi_1, \cdots, \varpi_k \}$ when $j \neq 3$. Similarly, as $\leg{\varpi_1}{\varpi_2}_3=\leg{\varpi_2}{\varpi_1}_3, \psi_{\varpi_1}(\varpi_2)=\psi_{\varpi_2}(\varpi_1)$, $P_3(a)$ and $\Psi_j(a), 1 \leq j \leq 4$ are also independent of the order of
   $\{\varpi_1, \cdots, \varpi_k \}$. \newline

   Now we have
\begin{lemma}
\label{lem2} Let $(rf\alpha,6)=1$. Suppose $f , \alpha$ are square-free and $(r, f ) = 1$, and set
\begin{equation*}
  h(r,f,s;\chi)=\sum_{(n,rf)=1}\frac {\chi(n)g_6(r,n)}{N(n)^s},  \quad  h_{\alpha}(r,s;\chi)=\sum_{(n,\alpha)=1}\frac {\chi(n)g_6(r,n)}{N(n)^s}.
\end{equation*}
  Furthermore suppose $r =r_1r^2_2r^3_3r^4_4r^5_5r^6_6, r^*=r_1r^2_2r^3_3r^4_4r^5_5$ where $r_1r_2r_3r_4r_5$ is square-free, and let $r^*_6$ be the product
  of primes dividing $r_6$. Let $1 \leq j \leq 4$ and
\begin{align*}
 & h^*_{\prod^{j-1}_{i=1}r_i}(r^*,s;\chi) \\
 = &\sum_{\substack{ a|r_j \\ a \text{ $E$-primary}}}\mu_{[\omega]}(a)\chi(a)^{j+1}N(a)^{-(j+1)s}\overline{\leg{r^*/a^j}{a^{j+1}}_6} P_j(a)\Psi_j(a) \left( \prod_{\substack{ \varpi| a \\ \varpi \text{ $E$-primary}}}g_6(\varpi^j, \varpi^{j+1})
 \right)\\
 & \hspace*{2in} \times    h_{\prod^{j-1}_{i=1}r_i}(r^*a^{4-2j},s;\psi_{a^{j+1}}\chi),
\end{align*}
  where the empty product is understood to be $1$. Then
\begin{align}
\label{2.11}
\begin{split}
 h(r,f,s;\chi)=&\sum_{a | f}\frac {\mu_{[\omega]}(a)\chi(a)g_6(r,a)}{N(a)^s}h(a^2r,s;\psi_a\chi), \\
  h(r_1r^2_2r^3_3r^4_4r^5_5r^6_6,s;\chi) =& h(r^*, r^*_6,s;\chi), \\
  h(r^*,s;\chi) =& \prod_{\substack{ \varpi|r_5 \\ \varpi \text{ $E$-primary}}}(1-\chi(\varpi)^6N(\varpi)^{5-6s})^{-1}h_{r_1r_2r_3r_4}(r^*,s;\chi), \\
   h_{\prod^j_{i=1}r_i}(r^*,s;\chi) & \\
  =& \prod_{\substack{ \varpi|r_j \\ \varpi \text{ $E$-primary}}}\left (1-\psi_{\varpi^{j+1}}(\varpi^{5-j})\chi(\varpi)^6N(\varpi)^{-6s}g_6(\varpi^j, \varpi^{j+1})g_6(\varpi^{4-j}, \varpi^{5-j})\right
  )^{-1}  h^*_{\prod^{j-1}_{i=1}r_i}(r^*,s;\chi).
\end{split}
\end{align}
\end{lemma}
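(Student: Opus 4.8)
The plan is to establish the four displayed identities in \eqref{2.11} one at a time, each by a combinatorial decomposition of the summation variable $n$ together with the multiplicativity relations \eqref{eq:gmult}, \eqref{2.03} and the explicit local evaluations \eqref{2.04}. First I would prove the identity for $h(r,f,s;\chi)$: to remove the coprimality condition $(n,f)=1$, apply Möbius inversion over $\mz[\omega]$, writing the restriction as $\sum_{a\mid (n,f)}\mu_{[\omega]}(a)$. Since $f$ is square-free with $(f,6)=1$, we may pick the $E$-primary representative $a\mid f$; substituting $n = a n'$ with $(n',r)=1$ and using \eqref{2.03} together with \eqref{eq:gmult} to split $g_6(r, a n') = \leg{n'}{a}_6\leg{a}{n'}_6 g_6(r,a) g_6(r,n')$, and absorbing the quadratic-type factor $\leg{a}{n'}_6^{3}$-part into the twisted character $\psi_a\chi$ (here one uses that $\leg{(-1)^{(N(a)-1)/2}}{n'}_6^3 = \psi_a(n')$ and the sextic reciprocity to flip $\leg{n'}{a}\leg{a}{n'}$), one obtains the stated twisted sum $h(a^2 r, s; \psi_a\chi)$. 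The exponent $a^2 r$ arises because $\leg{a}{n'}_6\leg{n'}{a}_6$ combined with a shift of $n'$ in the Gauss sum argument contributes a further factor of $\leg{a^2}{n'}$, which is exactly the effect of replacing the first argument $r$ by $a^2 r$ via \eqref{eq:gmult}.

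Next I would treat the purely local reductions. The identity $h(r_1 r_2^2\cdots r_6^6, s;\chi) = h(r^*, r^*_6, s;\chi)$ follows from the last two cases of \eqref{2.04}: when $\varpi^6 \| r$ one has $g_6(\varpi^k,\varpi^l) = 0$ unless $l \le k$, forcing every prime of $r_6$ to divide $n$ to a multiplicity that is a multiple of $6$; summing the resulting geometric factor $\varphi(\varpi^l)N(\varpi)^{-ls}$ over $l\equiv 0\pmod 6$ with the coprimality of $n$ to $r^*_6$ built in gives the claimed reduction after factoring out the Euler factors at $\varpi\mid r_6$ (which are holomorphic and nonzero on $\Re(s)>5/6$). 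The third identity, extracting the factor $\prod_{\varpi\mid r_5}(1-\chi(\varpi)^6 N(\varpi)^{5-6s})^{-1}$, is the same computation for the case $k\equiv 5\pmod 6$ of \eqref{2.04}, where $g_6(\varpi^5,\varpi^6) = -N(\varpi)^5$: one separates the contribution of the primes dividing $r_5$ according to whether $\varpi\mid n$ (generating the geometric series in $N(\varpi)^{5-6s}$) or $\varpi\nmid n$ (imposing the coprimality in $h_{r_1 r_2 r_3 r_4}$).

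The fourth identity, the recursive step producing $h^*_{\prod_{i=1}^{j-1} r_i}$ from $h_{\prod_{i=1}^{j} r_i}$, is the heart of the matter and the step I expect to be the main obstacle, since it simultaneously untangles the Gauss-sum twist, the sextic-reciprocity phases $P_j(a),\Psi_j(a)$, and the correction character $\psi_{a^{j+1}}$. Here one writes $n = a n'$ with $a\mid r_j$ square-free and $E$-primary and $(n', r_j) = 1$, applies \eqref{2.03} to factor $g_6(r^*, a n')$, then uses \eqref{2.04} with $l = k+1 = j+1$ to evaluate $g_6(\varpi^j, \varpi^{j+1})$ at each $\varpi\mid a$; the reciprocity factor $\leg{n'}{a}_6\leg{a}{n'}_6$ gets rewritten by sextic reciprocity as $\psi_{a^{j+1}}$ times a power of $\leg{a}{n'}$, the latter absorbed into the first argument of $g_6$ as the factor $a^{4-2j}$ (because $\leg{a}{n'}_6\leg{n'}{a}_6$ applied $j+1$ times shifts the argument from $r^*$ to $r^* a^{j+1} \cdot (\text{something}) \equiv r^* a^{4-2j}$ modulo sixth powers, using $a^{6}$ being a sixth power). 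The iterated reciprocity swaps across the primes $\varpi_1,\dots,\varpi_k$ dividing $a$ produce precisely the telescoping products $P_j(a)$ and $\Psi_j(a)$ as defined, and the independence-of-ordering remarks established just before the lemma guarantee these are well-defined. Finally, isolating the term $a = \prod_{\varpi\mid r_j}\varpi$ versus proper divisors, and summing the geometric contribution of each $\varpi\mid r_j$ that divides $n$ — which by \eqref{2.04} carries the factor $g_6(\varpi^j,\varpi^{j+1}) g_6(\varpi^{4-j},\varpi^{5-j})$ per the symmetry $k\leftrightarrow 4-j$ in the exponents — yields the Euler product $\prod_{\varpi\mid r_j}(1 - \psi_{\varpi^{j+1}}(\varpi^{5-j})\chi(\varpi)^6 N(\varpi)^{-6s} g_6(\varpi^j,\varpi^{j+1}) g_6(\varpi^{4-j},\varpi^{5-j}))^{-1}$ times $h^*$. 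Throughout, convergence is not an issue because all manipulations are first carried out formally on $\Re(s)$ large and the Euler factors pulled out are visibly holomorphic and non-vanishing on a half-plane extending past $\Re(s) = 7/6$, so the identities persist by analytic continuation.
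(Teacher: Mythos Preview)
Your outline for the first three identities in \eqref{2.11} is essentially aligned with the paper's approach (which is modelled on \cite[Lemma~3.6]{B&Y}), though note that the second identity is nearly immediate: since $(r_6^6/n)_6=1$, one has $g_6(r,n)=g_6(r^*,n)$ and $(n,r)=1\Leftrightarrow (n,r^*r_6^*)=1$, so no geometric–series argument is needed there. For the third identity your ``geometric series'' description is slightly off: by \eqref{2.04} only the $\varpi$-exponents $0$ and $6$ survive, so one gets directly $h_\alpha(r^*)=(1-\chi(\varpi)^6N(\varpi)^{5-6s})\,h_{\alpha\varpi}(r^*)$ and inverts.

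The genuine gap is in your treatment of the fourth identity. First, the decomposition ``$n=an'$ with $a\mid r_j$ squarefree and $(n',r_j)=1$'' is not correct as written: by \eqref{2.04} the $r_j$-part of any contributing $n$ must be $a^{j+1}$ for some squarefree $a\mid r_j$, not $a$ itself. More importantly, even with this fix, carrying out that decomposition expresses $h_{r_1\cdots r_{j-1}}(r^*)$ as a sum over $a\mid r_j$ of terms $h_{r_1\cdots r_j}(r^*a^{4-2j};\psi_{a^{j+1}}\chi)$ --- which is the \emph{wrong direction} for what is claimed. The Euler product $\prod_{\varpi\mid r_j}(1-\cdots)^{-1}$ does \emph{not} arise here from a simple geometric series as in the $j=5$ case; rather, for each prime $\varpi\mid r_j$ one must derive a \emph{coupled pair} of relations. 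Writing $n=\varpi^{j+1}n'$ in $h_{\alpha\varpi}(a\varpi^j;\chi)$ yields
\[
h_{\alpha\varpi}(a\varpi^j;\chi)=h_\alpha(a\varpi^j;\chi)-C_1\,h_{\alpha\varpi}(a\varpi^{4-j};\psi_{\varpi^{j+1}}\chi),
\]
and the companion computation starting from $h_{\alpha\varpi}(a\varpi^{4-j};\psi_{\varpi^{j+1}}\chi)$ gives a second equation of the same shape with $j\leftrightarrow 4-j$. Solving this $2\times 2$ linear system is what produces the factor $\bigl(1-\psi_{\varpi^{j+1}}(\varpi^{5-j})\chi(\varpi)^6N(\varpi)^{-6s}g_6(\varpi^j,\varpi^{j+1})g_6(\varpi^{4-j},\varpi^{5-j})\bigr)^{-1}$, together with the two $h_\alpha$ terms that, after induction on the primes of $r_j$, assemble into the M\"obius sum defining $h^*$ (with $P_j(a)$ and $\Psi_j(a)$ recording the accumulated reciprocity phases). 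Your sketch mentions the $j\leftrightarrow 4-j$ symmetry but does not isolate this coupled recursion, which is the mechanism the paper makes explicit.
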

\begin{proof}
    As the proof is similar to that of \cite[Lemma 3.6]{B&Y}, we only give the proof for the last equality given in \eqref{2.11} here.
To prove this, we let $a=\prod^5_{i=1}a^i_i \in \mz[\omega]$ and let $\varpi$ be an $E$-primary prime in $\mz[\omega]$ such that $a^*\varpi$ is square-free, where
$a^*=\prod^5_{i=1}a_i$. Then
\begin{align*}
  h_{a^*\varpi}(a\varpi^j,s;\chi) &=\sum_{(n,a^*\varpi)=1}\frac {\chi(n)g_6(a\varpi^j,n)}{N(n)^s} \\
  &=\sum_{(n,a^*)=1}\frac {\chi(n)g_6(a\varpi^j,n)}{N(n)^s}-\sum_{\substack{(n, a^*)=1 \\ \varpi | n }}\frac {\chi(n)g_6(a\varpi^j,n)}{N(n)^s}.
\end{align*}
   Writing in the latter sum $n = \varpi^h n'$ with $(n', \varpi) = 1$, then \eqref{2.03} gives that
\begin{align*}
   g_6(a \varpi^j, \varpi^h n') = \leg{\varpi^h}{n'}_6\leg{n'}{\varpi^h}_6g_6(a \varpi^j, \varpi^h)g_6(a \varpi^j, n').
\end{align*}
    Using \eqref{eq:gmult} and \eqref{2.04} we see that $g_6(a \varpi^j, \varpi^h)=0$ unless $h=j+1$, in which case we deduce from sextic reciprocity and
    \eqref{eq:gmult} that
\begin{align*}
   g_6(a \varpi^j, \varpi^{j+1}n') = g_6(\varpi^j, \varpi^{j+1})\overline{\leg{a}{\varpi^{j+1}}_6}\psi_{\varpi^{j+1}}(n')g_6(a\varpi^{4-j}, n').
\end{align*}
   This implies that
\begin{equation}
\begin{split}
\label{2.16}
  h_{a^*\varpi} & (a\varpi^j,s;\chi)  \\
%  &=\sum_{(n,a^*)=1}\frac {\chi(n)g_6(a\varpi^j,n)}{N(n)^s}-\chi(\varpi^{j+1})N(\varpi)^{-(j+1)s}g_6(\varpi^j, \varpi^{j+1})\overline{\leg{a}{\varpi^{j+1}}_6} h_{a^*\varpi}(a\varpi^{4-j}  ,s;\psi_{\varpi^{j+1}}\chi) \\
  &=h_{a^*}(a\varpi^j,s;\chi)-\chi(\varpi^{j+1})N(\varpi)^{-(j+1)s}g_6(\varpi^j, \varpi^{j+1})\overline{\leg{a}{\varpi^{j+1}}_6}  h_{a^*\varpi}(a\varpi^{4-j}
  ,s;\psi_{\varpi^{j+1}}\chi).
  \end{split}
\end{equation}

    On the other hand,
\begin{align*}
   h_{a^*\varpi}(r^*\varpi^{4-j}  ,s;\psi_{\varpi^{j+1}}\chi) &=\sum_{(n,a^*\varpi)=1}\frac {\psi_{\varpi^{j+1}}(n)\chi(n)g_6(a\varpi^{4-j} ,n)}{N(n)^s} \\
  &=\sum_{(n,a^*)=1}\frac {\psi_{\varpi^{j+1}}(n)\chi(n)g_6(a\varpi^{4-j} ,n)}{N(n)^s}-\sum_{\substack{(n,a^*)=1 \\ \varpi | n }}\frac
  {\psi_{\varpi^{j+1}}(n)\chi(n)g_6(a\varpi^{4-j} ,n)}{N(n)^s}.
\end{align*}
   Again, writing in the latter sum $n = \varpi^hn'$ with $(n', \varpi) = 1$, then \eqref{2.03} yields that
\begin{align*}
   g_6(a\varpi^{4-j} ,\varpi^h n') = \leg{\varpi^h}{n'}_6\leg{n'}{\varpi^h}_6g_6(a\varpi^{4-j} ,\varpi^h)g_6(a\varpi^{4-j}, n').
\end{align*}
Using the same treatment as earlier, \eqref{eq:gmult} and \eqref{2.04} imply that $g(a\varpi^{4-j} ,\varpi^h)=0$ unless $h=5-j$.  Then in that case the sextic reciprocity, together with \eqref{eq:gmult}, yields that
\begin{align*}
   g_6(a\varpi^{4-j} ,\varpi^{5-j} n')= g_6(\varpi^{4-j}, \varpi^{5-j})\overline{\leg{a}{\varpi^{5-j}}_6}\psi_{\varpi^{5-j}}(n')g_6(a\varpi^j, n').
\end{align*}
   This implies that
\begin{equation}
\begin{split}
\label{2.17}
 h_{a^*\varpi} &(a\varpi^{4-j}  ,s;\psi_{\varpi^{j+1}}\chi) \\
  &= h_{a^*}(a\varpi^{4-j}  ,s;\psi_{\varpi^{j+1}}\chi)-\psi_{\varpi^{j+1}}(\varpi^{5-j})\chi(\varpi^{5-j})N(\varpi)^{-(5-j)s}g_6(\varpi^{4-j},
  \varpi^{5-j})\overline{\leg{a}{\varpi^{5-j}}_6}  h_{a^*\varpi}(a\varpi^{j}  ,s;\chi), \nonumber
\end{split}
\end{equation}
   as one checks easily that $\psi_{\varpi^{j+1}}\psi_{\varpi^{5-j}}$ is principal. \newline

Combining \eqref{2.16} and \eqref{2.17}, we get
\begin{align*}
  h_{a^*\varpi} & (a\varpi^j,s;\chi) = (1-\psi_{\varpi^{j+1}}(\varpi^{5-j})\chi(\varpi)^6N(\varpi)^{-6s}g_6(\varpi^j, \varpi^{j+1})g_6(\varpi^{4-j},
  \varpi^{5-j}))^{-1}(h_{a^*}(a\varpi^j,s;\chi) \\
  & \hspace*{1.5in} -\chi(\varpi^{j+1})N(\varpi)^{-(j+1)s}g_6(\varpi^j, \varpi^{j+1})\overline{\leg{a}{\varpi^{j+1}}_6}    h_{a^*}(a\varpi^{4-j}
  ,s;\psi_{\varpi^{j+1}}\chi)).
\end{align*}
   Note that when $j \neq 3$, we have $\leg {\varpi'^{4-j}}{\varpi^{j+1}}_6=1$ for two distinct primes $\varpi, \varpi'$ as $(4-j)(j+1) \equiv 0 \pmod 6$.
When $j=3$, we have $\leg{a}{\varpi^{4}}_6=1$ when $a$ is a cube. These observations together with an induction argument on the number of prime divisors of $a_j$ leads to the last equality given in \eqref{2.11}.
\end{proof}

\subsection{The large sieve with sextic symbols}  One important input of this paper is the following large sieve inequality for sextic Hecke
characters.   The study of the large sieve inequality for characters of a fixed order has a long history.  We refer the reader to \cites{DRHB, DRHB1,
G&Zhao, B&Y, BGL}.

\begin{lemma}\cite[Theorem 1.3]{BGL} \label{sexticls}
Let $M,N$ be positive integers, and $(a_n)$ be an arbitrary sequence of complex numbers, where $n$ runs over $\mz[\omega]$. Then we
have
\begin{equation*}
\label{eq:sextic}
 \sumstar_{\substack{m \in \mz[\omega] \\N(m) \leq M}} \left| \ \sumstar_{\substack{n \in \mz[\omega] \\N(n) \leq N}} a_n \leg{n}{m}_6 \right|^2
 \ll_{\varepsilon} (M + N + (MN)^{2/3})(MN)^{\varepsilon} \sum_{N(n) \leq N} |a_n|^2,
\end{equation*}
   for any $\varepsilon > 0$, where the asterisks indicate that $m$ and $n$ run over square-free $E$-primary elements of $\mz[\omega]$ and $\leg
{\cdot}{m}_6$ is the sextic residue symbol.
\end{lemma}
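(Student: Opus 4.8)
This estimate is quoted from \cite[Theorem~1.3]{BGL}, so I shall only outline how one would prove such a bound; it is the sextic analogue of the quadratic large sieve inequality, established for cubic symbols in \cite{B&Y} and in the general $n$-th order setting in \cite{BGL}. The first move is to apply the duality principle for bilinear forms, which leaves the operator norm (hence the admissible constant) unchanged: the asserted inequality holds with a given constant $C$ if and only if
\[ \sumstar_{N(n)\leq N}\Bigl|\sumstar_{N(m)\leq M}b_m\leg{n}{m}_6\Bigr|^2 \leq C\sumstar_{N(m)\leq M}|b_m|^2 \]
for every complex sequence $(b_m)$. Since the target bound $(M+N+(MN)^{2/3})(MN)^{\varepsilon}$ is symmetric in $M$ and $N$, I would prove whichever of the two formulations is more convenient --- say the dual one above. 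Inserting a smooth majorant $\eta(N(n)/N)$ for the cutoff $N(n)\leq N$ and opening the square over $m_1$ and $m_2$ then reduces matters to estimating, for squarefree $E$-primary $m_1,m_2$,
\[ \sum_{n}\leg{n}{m_1}_6\,\overline{\leg{n}{m_2}_6}\,\eta\Bigl(\frac{N(n)}{N}\Bigr), \]
where $n\mapsto\leg{n}{m_1}_6\,\overline{\leg{n}{m_2}_6}$ is a Dirichlet character of $\mz[\omega]$ modulo $\mathrm{lcm}(m_1,m_2)$.

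The second step is a two-dimensional Poisson summation in $n$ over the lattice $\mz[\omega]\cong\mz^2$, after sorting $n$ into residue classes modulo $\mathrm{lcm}(m_1,m_2)$. The zero frequency gives a main term that is nonzero only when the above character is principal, i.e. only when $m_1$ and $m_2$ are associates (both being squarefree); paired against $b_{m_1}\overline{b_{m_2}}$ this diagonal contributes $\ll(MN)^{\varepsilon}N\sum_m|b_m|^2$, which is the $N$-term of the bound (the $M$-term arising symmetrically from the other formulation). Each nonzero frequency $k$ produces a sextic Gauss sum attached to $\mathrm{lcm}(m_1,m_2)$, of modulus at most $\sqrt{N(\mathrm{lcm}(m_1,m_2))}$ by \eqref{2.1} (up to Ramanujan-type factors from the imprimitive part of the character), with $k$ essentially restricted to $N(k)\ll N(\mathrm{lcm}(m_1,m_2))\,N^{-1+\varepsilon}$ by the rapid decay of $\widehat{\eta}$. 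It then remains to bound the resulting off-diagonal sum of sextic Gauss sums over $m_1$, $m_2$ and $k$.

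This last point is the substantive one, and where I expect the real difficulty. Using the multiplicativity relations \eqref{eq:gmult} and \eqref{2.03} together with the local evaluations \eqref{2.04}, one first reduces to Gauss sums at squarefree moduli, where \eqref{2.1} supplies square-root cancellation. But applying the triangle inequality with \eqref{2.1} alone is far too wasteful to reach the exponent $2/3$; that exponent can only be obtained by extracting genuine cancellation from the sum over the moduli of the sextic Gauss sums. This is precisely the arithmetic input underlying Lemmas \ref{lem1} and \ref{lem2} of the present paper, namely S.~J.~Patterson's estimates in \cite{P} for mean values of cubic Gauss sums, which pass to the sextic case via the factorisation of the sextic symbol into its quadratic and cubic parts. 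Granting that cancellation, a Cauchy--Schwarz split of $b_{m_1}\overline{b_{m_2}}$ together with a count of the admissible $k$ would yield an off-diagonal bound of the shape $(MN)^{2/3+\varepsilon}\sum_m|b_m|^2$, and combining this with the diagonal term gives the stated inequality. The remaining ingredients --- duality, smoothing, opening the square, and Poisson summation --- are routine, provided one keeps careful track of the $E$-primary representatives and the sign factors from sextic reciprocity, exactly as in \cite{B&Y} and \cite{BGL}.
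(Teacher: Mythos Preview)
The paper does not supply a proof of this lemma at all; it is simply quoted verbatim from \cite[Theorem~1.3]{BGL}, so there is no ``paper's own proof'' to compare your sketch against. Your opening acknowledgement of this is correct, and the first two moves you describe (duality and Poisson summation after smoothing and opening the square, with the diagonal producing the $N$-term) are indeed the opening steps of the argument in \cite{BGL}, following Heath--Brown's paradigm.

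Where your sketch goes astray is in the treatment of the off-diagonal. You attribute the saving to Patterson's mean-value estimates \cite{P} for Gauss sums, invoking Lemmas~\ref{lem1} and~\ref{lem2} of the present paper as the ``arithmetic input''. That is not how \cite{BGL} (or Heath--Brown, or \cite{B&Y}) obtains the exponent $2/3$. After Poisson summation the nonzero frequencies produce sextic Gauss sums attached to $m_1,m_2$, and the crucial observation is that, via the multiplicativity relations and sextic reciprocity, the resulting triple sum over $m_1,m_2,k$ can itself be reorganised as a bilinear form of the \emph{same} shape as the original, but with different length parameters. This yields a recursive inequality of the type $B(M,N)\ll_\varepsilon (MN)^\varepsilon\bigl(N + (M/N)^{1/2} B(M,K)\bigr)$ for a suitable $K$, and iterating this recursion together with duality is what forces the $(MN)^{2/3}$ term to appear as the fixed point. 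Patterson's theorem is not invoked; in particular, the large sieve is logically independent of Lemmas~\ref{lem1} and~\ref{lem2} here (and indeed in \cite{B&Y} the large sieve and the Patterson-type input are separate tools). Your proposed route---Cauchy--Schwarz plus a direct Patterson-style bound on a single average of Gauss sums---would not recover the $2/3$ exponent, because the variables $m_1,m_2,k$ are too entangled for a one-variable mean-value estimate to suffice.
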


%%------------------------------------------------------------------------------
\subsection{Poisson Summation}
%%------------------------------------------------------------------------------
The proof of Theorem \ref{sexticmainthm} requires the following Poisson summation formula.
\begin{prop}
\label{Poissonsum6free} Let $n \in \mz[\omega]$ be $E$-primary and $\leg {\cdot}{n}_6$ be the sextic residue symbol $\pmod {n}$. For any Schwartz class function
$\Phi$, we have
\begin{align*}
   \sum_{\substack {c \in \mz[\omega] \\ (c,6)=1}}\leg {c}{n}_6\Phi \left(\frac {N(c)}{X}\right)=\sum_{\substack {(m) \\m | 6}}\mu_{[\omega]}(m)\leg {m}{n}_6\frac {X}{N(m)N(n)}\sum_{k \in
   \mz[\omega]}g_6(k,n)\widetilde{\Phi}\left(\sqrt{\frac {N(k)X}{N(m)N(n)}}\right),
\end{align*}
    where
\begin{align*}
   \widetilde{\Phi}(t) &=\int\limits^{\infty}_{-\infty}\int\limits^{\infty}_{-\infty}\Phi(N(x+y\omega))\widetilde{e}\left(- t(x+y\omega)\right)\dif x \dif y, \quad t \geq 0.
\end{align*}
\end{prop}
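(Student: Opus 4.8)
The plan is to derive this two-dimensional Poisson summation formula from the classical Poisson summation on the lattice $\mz[\omega] \cong \mz^2$, following the usual strategy for such character sums. First I would sort the elements $c \in \mz[\omega]$ with $(c,6)=1$ into residue classes modulo $6n$. Writing $c = 6n q + a$ as $q$ ranges over $\mz[\omega]$ and $a$ ranges over a fixed set of representatives of $\mz[\omega]/6n\mz[\omega]$, the condition $(c,6)=1$ depends only on $a$, and the sextic symbol $\leg{c}{n}_6$ is periodic modulo $n$, hence certainly modulo $6n$, so $\leg{c}{n}_6 = \leg{a}{n}_6$. This lets me write the left-hand side as a sum over $a \bmod 6n$ of $\leg{a}{n}_6$ times $\sum_{q \in \mz[\omega]} \Phi(N(6nq+a)/X)$, where the sum over $a$ is restricted to $(a,6)=1$.

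The next step is to apply the two-dimensional Poisson summation formula to the inner sum over $q$. Identifying $q = u + v\omega$ with $(u,v) \in \mz^2$ and $x + y\omega \leftrightarrow (x,y) \in \mr^2$, the function $q \mapsto \Phi(N(6nq+a)/X)$ is a Schwartz function of $(u,v)$, and Poisson summation turns $\sum_q$ into a sum over the dual lattice, which I will parametrize again by $k \in \mz[\omega]$. A change of variables to absorb the affine shift by $a$ and the dilation by $6n$ produces a factor $X/(N(6n))= X/(36 N(n))$ out front, a phase $\widetilde{e}$-type character in $a$ (coming from the shift $a$ paired against the dual variable $k$, using that the self-dual additive character on $\mz[\omega]$ with respect to the trace form is exactly $z \mapsto \widetilde{e}(z)$, i.e. $e((z-\bar z)/\sqrt{-3})$), and the transform $\widetilde{\Phi}$ evaluated at $\sqrt{N(k)X/(36 N(n))}$ after rescaling. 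Here I must be careful with the exact normalization of the additive character and the covolume of $\mz[\omega]$ so that the resulting Fourier transform matches the definition of $\widetilde{\Phi}$ given in the statement; getting the arguments $\sqrt{N(k)X/(N(m)N(n))}$ exactly right is a bookkeeping matter.

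After Poisson summation the sum over $a \bmod 6n$, $(a,6)=1$, of $\leg{a}{n}_6 \widetilde{e}(ka/(6n))$ must be evaluated. I would split this using the Chinese Remainder Theorem: writing $a$ via $a \bmod 6$ and $a \bmod n$ (legitimate since $(6,n)=1$), the sum over $a \bmod n$ of $\leg{a}{n}_6 \widetilde{e}(\cdot)$ is precisely a Gauss sum $g_6(k', n)$ for an appropriate argument $k'$, while the sum over $a \bmod 6$ with $(a,6)=1$ of the remaining exponential is a Ramanujan-type sum over $\mz[\omega]$ with modulus $6$. Using multiplicativity (the factorization $6 = -\omega^2(1-\omega)^2 \cdot 2 \cdot \bar{2}$-style splitting, or just $6 = 2 \cdot 3$ with $3 = -\omega^2(1-\omega)^2$) and Möbius inversion on the divisors $m \mid 6$ to enforce $(a,6)=1$, this Ramanujan sum expands into $\sum_{m \mid 6} \mu_{[\omega]}(m) \leg{m}{n}_6 \cdot(\text{modulus bookkeeping})$, which reconstitutes exactly the outer sum over $m \mid 6$ in the claimed formula. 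The main obstacle, I expect, is precisely this last step: correctly disentangling the congruence condition $(c,6)=1$ via Möbius inversion and tracking how the divisor $m$ enters both as the factor $\leg{m}{n}_6/N(m)$ and inside the argument of $\widetilde{\Phi}$, together with confirming that the cross terms from CRT collapse so that the Gauss sum $g_6(k,n)$ appears with argument $k$ itself (not $6^{-1}k$ or similar) — this requires using the homogeneity relation \eqref{eq:gmult} to clear the spurious unit/factor from the $a \bmod 6$ reduction. The analytic input (Poisson summation, Schwartz decay) is entirely routine; everything hard is algebraic normalization.
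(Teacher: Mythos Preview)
The paper does not actually prove this proposition: it states that the result is an easy consequence of a variation of a lemma from an earlier paper by the same authors and omits the argument entirely. Your plan---split into residue classes, apply two-dimensional Poisson summation on the lattice $\mz[\omega]$, and identify the resulting exponential sums as sextic Gauss sums---is precisely the standard method behind such formulas, so your approach is correct.

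One remark on the order of operations. You propose to apply Poisson modulo $6n$ first and afterwards use M\"obius/CRT on the Ramanujan-type sum modulo $6$ to impose $(c,6)=1$. That can be made to work, but the obstacle you flag at the end---getting $m$ to appear inside the argument of $\widetilde{\Phi}$ and obtaining $g_6(k,n)$ rather than $g_6(6^{-1}k,n)$---is created by this ordering; note also that a pure Ramanujan sum modulo $6$ carries no factor $\leg{m}{n}_6$, so that factor cannot emerge from the place you suggest. If instead you first write
\[
\sum_{(c,6)=1}=\sum_{m\mid 6}\mu_{[\omega]}(m)\sum_{m\mid c},
\]
substitute $c=mc'$ so that $\leg{c}{n}_6=\leg{m}{n}_6\leg{c'}{n}_6$ and $\Phi(N(c)/X)=\Phi\bigl(N(c')/(X/N(m))\bigr)$, and only then apply Poisson summation to the free sum over $c'\in\mz[\omega]$ modulo $n$, the stated identity drops out with every factor already in place: the $\leg{m}{n}_6/N(m)$ and the rescaled argument $\sqrt{N(k)X/(N(m)N(n))}$ both come directly from the substitution $c=mc'$, and the inner exponential sum over $c'\bmod n$ is literally $g_6(k,n)$ with no spurious inverse of $6$ to clear.
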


   The above result is an easy consequence of a variation of \cite[Lemma 2.6]{G&Zhao2019}, we omit its proof here. When $\Phi(t)$ is the one given in Theorem \ref{sexticmainthm},
   we have the following estimations for $\widetilde{\Phi}$ and its derivatives \cite[(2-15)]{G&Zhao9}: $\widetilde{\Phi}(t) \in \mr$ for any $t \geq 0$ and
\begin{align}
\label{bounds}
     \widetilde{\Phi}^{(\mu)}(t) \ll_{j} \min \{ 1, U^{j-1}t^{-j} \}
\end{align}
    for all integers $\mu \geq 0$, $j \geq 1$ and all $t>0$. \newline

\section{Proof of Theorem ~\ref{firstmoment}}

  We derive readily from \eqref{approxfuneq}, the approximate functional equation, with $G(s)=1$, $t=0$ and $x=3N(c)/z$ that
\begin{align*}
%%\label{sum1}
  \sumstar_{c \equiv 1 \bmod {36}}L \left( \frac{1}{2},
   \chi_c \right) W\left( \frac{N(c)}{y} \right) =M_1+M_2,
\end{align*}
   where
\begin{align*}
%%\label{quadapproxfuneq}
   M_1 &= \sumstar_{c \equiv 1 \bmod {36}} \ \sum_{0 \neq \mathcal{A} \subset
  O_K} \frac{\chi_c(\mathcal{A})}{N(\mathcal{A})^{1/2}}V \left(\frac{2\pi N(\mathcal{A})z }{3N(c)} \right)W\left( \frac{N(c)}{y} \right), \quad \mbox{and} \\
 M_2 &=  \sumstar_{c \equiv 1 \bmod {36}} \ \frac {g_6(c)}{N(c)^{1/2}} \sum_{0 \neq \mathcal{A} \subset
  O_K} \frac{\overline{\chi_c}(\mathcal{A})}{N(\mathcal{A})^{1/2}}V \left(\frac{2\pi N(\mathcal{A}) }{z} \right)W\left( \frac{N(c)}{y} \right).
\end{align*}

\subsection{Evaluating $M_1$, the main term}
\label{section:M1}

   Since any integral non-zero ideal $\mathcal{A}$ in $\mz[\omega]$ has a unique generator
$2^{r_1}(1-\omega)^{r_2}a$, with $r_1$, $r_2\in \intz$, $r_1$, $r_2 \geq 0$ and an $E$-primary $a \in \intz[\omega]$, it follows from \eqref{cubicsupp} and \eqref{2.05} that $\chi_{c}(2)=\chi_c(1-\omega)=1$, which implies that
$\chi_{c}(\mathcal{A}) = \chi_c(a)$. \newline

   We now define for $a$ being $E$-primary, $(c, 6)=1$,
\begin{align*}
  \chi^{(a)}(c)=\leg {a}{c}_6.
\end{align*}
   Similar to our discussions in Section \ref{sec 2}, one can check easily that $\chi^{(a)}$ is a Hecke character modulo $36a$ of trivial infinite type. \newline

The above discussions allow us to recast $M_1$ as
\[  M_1=  \sum_{\substack{r_1,r_2 \geq 0 \\  a \text{ $E$-primary}}} \frac{1}{2^{r_1}3^{r_2/2}N(a)^{1/2}}M(r,a),\]
where
\[ M(r,a)= \sumstar_{c \equiv 1 \bmod {36}}\chi^{(a)}(c)V\left( \frac{\pi 2^{2r_1+1}3^{r_2-1}N(a)z}{y} \frac{y}{N(c)} \right ) W\left( \frac{N(c)}{y}
\right). \]
  Now we use M\"obius inversion to detect the square-free condition of $c$, getting
\[   M(r,a)= \sum_{\substack{l \text{ $E$-primary}}}\mu_{[\omega]}(l)\chi^{(a)}(l^2) M(l,r,a), \]
  with
\[ M(l,r,a)= \sum_{\substack{ cl^2 \equiv 1 \bmod {36} \\ c \text{ $E$-primary}}}\chi^{(a)}(c)V \left( \frac{\pi  2^{2r_1+1}3^{r_2-1}N(a)z}{y} \frac{y
}{N(cl^2)} \right )W\left( \frac{N(cl^2)}{y} \right). \]

    By Mellin inversion, we have
\[  V \left( \frac{\pi  2^{2r_1+1}3^{r_2-1}N(a)z}{y} \frac{y}{N(cl^2)} \right ) W\left( \frac{N(cl^2)}{y} \right) = \frac 1{2\pi i}\int\limits_{(2)} \left(
\frac{y}{N(cl^2)} \right)^s \hat{f}(s) \ \dif s,\]
   where
\begin{align*}
  \hat{f}(s)=\int\limits^{\infty}_{0}V \left( \frac{\pi 2^{2r_1+1}3^{r_2-1}N(a)z}{xy} \right) W(x) x^{s-1} \dif x.
\end{align*}

Integrating by parts together with \eqref{2.07} shows $\hat{f}(s)$ is a function satisfying the bound
\begin{align}
\label{3.1}
  \hat{f}(s) \ll (1+|s|)^{-E} \left( 1+\frac{ 2^{2r_1+1}3^{r_2-1}N(a)z}{y} \right)^{-E},
\end{align}
for all $\Re(s) > 0$ and any integer $E>0$. \newline

    With this notation, we have
\begin{align*}
  M(l,r,a) = \frac 1{2\pi i}\int\limits_{(2)}\hat{f}(s) \left( \frac{y}{N(l^2)} \right)^s\sum_{\substack{  cl^2 \equiv 1 \bmod
  {36} \\ c \text{ $E$-primary}}}\frac {\chi^{(a)}(c)}{N(c)^s} \dif s.
\end{align*}

   Recall that for any $c$, the ray
class group $h_{(c)}$ is defined to be $I_{(c)}/P_{(c)}$, where
$I_{(c)} = \{ \mathcal{A} \in I : (\mathcal{A}, (c)) = 1 \}$ and
$P_{(c)} = \{(a) \in P : a \equiv 1 \pmod{c} \}$ with $I$ and $P$
denoting the group of fractional ideals in $K$ and the subgroup of
principal ideals, respectively. We now use the ray class characters to detect the
condition that $cl^2 \equiv 1 \pmod {36}$, getting
\begin{align*}
  M(l,r,a)=\frac {1}{\#h_{(36)}}\sum_{\psi \bmod {36}}\frac {\psi(l^2)}{2\pi
   i}\int\limits\limits_{(2)}\hat{f}(s) \left( \frac{y}{N(l^2)} \right)^s L(s, \psi\chi^{(a)})\dif s,
\end{align*}
   where $\psi$ runs over all ray class characters $\pmod {36}$, $\#h_{(36)}=108$ and
\begin{align*}
   L(s, \psi\chi^{(a)})=\sum_{\mathcal{A} \neq 0} \frac{\psi(\mathcal{A})\chi^{(a)}(\mathcal{A})}{N(\mathcal{A})^s}.
\end{align*}

   We compute $M_1$ by shifting the contour to the half line.  If $\psi\chi^{(a)}$ is principal, the Hecke $L$-function
has a pole at $s = 1$. We set $M_0$ to be the contribution to $M_1$ of these residues, and $M'_1$ to be the
remainder. \newline

We first evaluate $M_0$. Note that $\psi\chi^{(a)}$ is principal if and only if both $\psi$ and $\chi^{(a)}$ are principal. Hence $a$ must be
   a sixth power. We denote $\psi_0$ for the principal ray class character $\pmod {36}$. Then we have
\begin{align*}
   L \left( s, \psi_0\chi^{(a^6)} \right)=\zeta_{\mq(\omega)}(s)\prod_{(\varpi) |(6a)} \left(1-N(\varpi)^{-s} \right).
\end{align*}

   Let $c_0= \sqrt{3}\pi/9$, the residue of $\zeta_{\mq(\omega)}(s)$ at $s=1$.  Then we have
\begin{align*}
  M_0 &=\frac {y}{\#h_{(36)}} \sum_{\substack{r_1,r_2 \geq 0 \\ a \text{ $E$-primary}}}  \frac{\hat{f}(1)}{2^{r_1}3^{r_2/2}N(a)^3}\text{Res}_{s=1}L
  \left( s, \psi_0\chi^{(a^6)} \right) \sum_{\substack{l \text{ $E$-primary}}} \frac{\mu_{[\omega]}(l)\chi^{(a^6)}(l^2)}{N(l^2)}  \\
  &=\frac {c_0y}{\#h_{(36)}\zeta_{\mq(\omega)}(2)}\sum_{\substack{r_1,r_2 \geq 0 \\ a \text{ $E$-primary}}}
  \frac{\hat{f}(1)}{2^{r_1}3^{r_2/2}N(a)^3}\prod_{(\varpi) |
  (6a)}\left( 1-N(\varpi)^{-1} \right)\prod_{(\varpi) | (6a)} \left( 1-N(\varpi)^{-2} \right)^{-1} \\
  &=\frac {c_0y}{\#h_{(36)}\zeta_{\mq(\omega)}(2)} \sum_{\substack{r_1,r_2 \geq 0 \\ a \text{ $E$-primary}}}
  \frac{\hat{f}(1)}{2^{r_1}3^{r_2/2}N(a)^3}\prod_{(\varpi) |
  (6a)} \left( 1+N(\varpi)^{-1} \right)^{-1}.
\end{align*}

 Set
\begin{equation*}
 Z(u) = \sum_{\substack{r_1,r_2 \geq 0 \\ a \text{ $E$-primary}}} \frac{1}{\pi^u2^{r_1+(2r_1+1)u}3^{r_2/2+(r_2-1)u}N(a)^{3+6u}} \prod_{(\varpi) |
  (6a)} \left( 1+N(\varpi)^{-1} \right)^{-1},
\end{equation*}
which is holomorphic and bounded for $\Re(u) \geq -1/3 + \delta > -1/3$. \newline

   Note that using the Mellin convolution formula shows
\begin{align*}
 & \widehat{f}(1) \\
 =& \int\limits^{\infty}_{0}V \left( \frac{\pi 2^{2r_1+1}3^{r_2-1}N(a)^6z}{xy} \right) W(x) \dif x= \frac{1}{2 \pi i} \int\limits_{(1)}
 \leg{y}{\pi 2^{2r_1+1}3^{r_2-1}N(a)^6z}^s \widehat{W}(1+s) \frac{G(s)}{s} \frac {\Gamma(s+1/2)}{\Gamma (1/2)} \dif s,
\end{align*}	
   where
\begin{equation*}
\widehat{W}(s) = \int\limits_0^{\infty} W(x) x^{s-1} dx.
\end{equation*}

Then
\begin{equation*}
 M_0 = \frac {c_0y}{\#h_{(36)}\zeta_{\mq(\omega)}(2)}  \frac{1}{2 \pi i} \int\limits_{(1)} \left(\frac {y}{z} \right )^s Z(s) \widehat{W}(1+s) \frac{G(s)}{s}
 \frac {\Gamma(s+1/2)}{\Gamma (1/2)} \dif s.
\end{equation*}
We move the contour of integration to $-1/3 + \varepsilon$, crossing a pole at $s=0$ only.  The new contour contributes $O(\left(y/z \right
)^{-1/3 + \varepsilon} y)$, while the pole at $s=0$ gives
\begin{equation}
\label{eq:c}
  A y \widehat{W}(1), \quad \text{where} \quad A= \frac {c_0}{\#h_{(36)}\zeta_{\mq(\omega)}(2)} Z(0).
\end{equation}
Note that $Z(u)$ converges absolutely at $u=0$ so it is easy to express $Z(0)$ explicitly as an Euler product, if desired.
We then conclude that
\begin{align}
\label{m0}
  M_0 =  Ay \widehat{W}(1)+O\left( \left(\frac {y}{z} \right )^{-1/3 + \varepsilon} y \right).
\end{align}

\subsection{Estimating $M_1'$, the remainder term}
\label{section:remainderterm}

    To deal with $M'_1$, we bound everything by absolute values and use \eqref{3.1} to get that for any $E>0$,
\begin{equation} \label{3.2}
\begin{split}
   M'_1  \ll  y^{1/2} \sum_{N(l) \ll \sqrt{y}}\frac {1}{N(l)} & \sum_{\psi \bmod {36}} \sum_{\substack{r_1,r_2 \geq 0 \\ a \text{ $E$-primary}}}
   \frac{1}{2^{r_1}3^{r_2/2}N(a)^{1/2}}
   \left( 1+\frac{ 2^{2r_1+1}3^{r_2-1}N(a)z}{y} \right)^{-E} \\
   & \times \int\limits^{\infty}_{-\infty} \left| L\left(\frac 12+it, \psi\chi^{(a)} \right) \right| (1+|t|)^{-E} \dif t.
\end{split}
\end{equation}

We now need the following estimation to bound the sum over $a$:
\begin{align}
\label{3.04}
    \sum_{\substack{N(a) \leq N \\ a \text{ $E$-primary}}}  N(a)^{-1/2} \left| L\left( \frac 12+it, \psi\chi^{(a)} \right) \right| \ll
    (N(1+|t|))^{1/2+\epsilon}.
\end{align}

The proof of \eqref{3.04} is similar to that of \cite[(39)]{B&Y} and we will give a sketch of the arguments. We factor $a$ as $a_1 a_2^2 a_3^3a_4^4a^5_5a^6_6$
  where $a_1a_2a_3a_4a_5$ is square-free.  Then $\psi\chi^{(a)}$ equals $\psi\chi^{(a_1)}
  (\chi^{(a_2)})^2(\chi^{(a_3)})^3(\overline{\chi^{(a_4)}})^2\overline{\chi^{(a_5)}}$ times a principal character.  Here $(\chi^{(a_2)})^2$ (respectively
  $(\overline{\chi^{(a_4)}})^2$) can be regarded as a cubic Hecke character $\pmod{36a_2}$ (respectively $\pmod{36a_4}$) of trivial infinite type and
  $(\chi^{(a_3)})^3$ can be regarded as a quadratic Hecke character $\pmod{36a_3}$ of trivial infinite type. For each fixed $a_i, 2 \leq i \leq 5$, it
  suffices to show that
\begin{align}
\label{3.7}
 \sumstar_{\substack {N(a_1) \leq N_1 \\ (a_1, a_2a_3a_4a_5)=1}} |L(1/2 + it, \psi\chi^{(a_1)}
 (\chi^{(a_2)})^2(\chi^{(a_3)})^3(\overline{\chi^{(a_4)}})^2\overline{\chi^{(a_5)}})|^2 \ll N_1^{1+\varepsilon} N(a_2a_3a_4a_5)^{1/2+\varepsilon}
 (1+|t|)^{1+\varepsilon},
\end{align}
  where the asterisk indicates that $a_1$ runs over $E$-primary square-free elements of $\mz[\omega]$.
  With this bound and the convergence of the sums over $a_2, a_3, a_4, a_5, a_6$, a use of Cauchy's inequality gives \eqref{3.04}. \newline

   Note that as $a_1a_2a_3a_4a_5$ is square-free, the character $\chi^{(a_1)}
   (\chi^{(a_2)})^2(\chi^{(a_3)})^3(\overline{\chi^{(a_4)}})^2\overline{\chi^{(a_5)}}$ is primitive with conductor $f$ satisfying
\begin{align*}
    \frac {a_1a_2a_3a_4a_5}{(6,a_1a_2a_3a_4a_5)} | f \quad \mbox{and} \quad f | 36a_1a_2a_3a_4a_5.
\end{align*}
     We may now further assume that $\psi\chi^{(a_1)} (\chi^{(a_2)})^2(\chi^{(a_3)})^3(\overline{\chi^{(a_4)}})^2\overline{\chi^{(a_5)}}$ is primitive.
     Thus the Hecke $L$-function
      \[ L(s, \psi\chi^{(a_1)} (\chi^{(a_2)})^2(\chi^{(a_3)})^3(\overline{\chi^{(a_4)}})^2\overline{\chi^{(a_5)}}), \]
     viewed as a degree two $L$-function over $\mq$, has analytic conductor $\ll N_1N(a_2a_3a_4a_5) (1+ t^2)$. \newline

    We then apply the approximate functional equation \eqref{approxfuneq} with $G(s)=e^{s^2}$ for Hecke $L$-functions, removing the weight using the Mellin
    transform to reduce the problem of estimating \eqref{3.7} to bounding
\begin{equation*}
 \sumstar_{N(a_1) \leq N_1} \left| \sum_{\substack{N(n) \ll Q \\ n_1 \text{ $E$-primary}}} \frac{\psi\chi^{(a_1)}
 (\chi^{(a_2)})^2(\chi^{(a_3)})^3(\overline{\chi^{(a_4)}})^2\overline{\chi^{(a_5)}}(n)}{N(n)^{1/2 + it}} \right|^2.
\end{equation*}
   Moreover, by  \cite[Proposition 5.4]{HIEK}, we may truncate the
   sum over $n$ so that $Q \ll  (N_1N(a_2a_3a_4a_5)(1+t^2))^{1/2+\varepsilon}$ with a negligibly small error. \newline

 In the inner sum above, writing $n=n_1n^2_2$ with $n_1$, $n_2$ $E$-primary, $n_1$ square-free
 and using the Cauchy-Schwarz inequality, it is enough to estimate
\begin{equation*}
 \sumstar_{N(a_1) \leq N_1} \left| \ \sumstar_{\substack{N(n_1) \ll Q \\ n \text{ $E$-primary}}} \frac{\psi\chi^{(a_1)}
 (\chi^{(a_2)})^2(\chi^{(a_3)})^3(\overline{\chi^{(a_4)}})^2\overline{\chi^{(a_5)}}(n_1)}{N(n_1)^{1/2 + it}} \right|^2
\end{equation*}
 where the asterisk in the inner sum above indicates that $n_1$ runs over square-free elements of $\mz[\omega]$.
 The bound from Lemma \ref{eq:sextic} then gives the desired estimate for \eqref{3.7}. \newline

   We note that we may truncate the sums over $r_1, r_2, a$ so that $ 2^{2r_1+1}3^{r_2-1}N(a)z \ll  y^{1+\varepsilon}$ in \eqref{3.2} with a negligibly
   small error. We now apply \eqref{3.04} with $N=y^{1+\varepsilon}(2^{2r_1+1}3^{r_2-1}N(a)z)^{-1}$ (we may assume that $y$ is large enough) and treat all
   the sums trivially to see that
\begin{align}
\label{M'_1}
   M'_1 \ll y^{1/2}\left (\frac y{z} \right)^{1/2+\epsilon}.
\end{align}

\subsection{Estimating $M_2$}
\label{section:M2}
From the discussions at the beginning of Section \ref{section:M1}, we have
\begin{align*}
 M_2 &=  \sum_{\substack{r_1,r_2 \geq 0 \\  a \text{ $E$-primary}}} \frac{1}{2^{r_1}3^{r_2/2}N(a)^{1/2}}V \left(\frac{\pi 2^{2r_1+1}3^{r_2}N(a) }{z}
 \right)\sum_{c \equiv 1 \bmod {36}} \ \frac {g_6(c)\overline{\chi_c}(a)}{N(c)^{1/2}} W\left( \frac{N(c)}{y} \right).
\end{align*}
  Note that we can drop the restriction $^*$ in sum over $c$ above,  as it follows from \eqref{2.1} that $g_6(c) = 0$ unless $c$ is square-free. \newline

  We further use the ray class characters to detect the condition that $c \equiv 1 \pmod {36}$ to obtain
\begin{align*}
 M_2 &=  \frac {1}{\#h_{(36)}}\sum_{\substack{r_1,r_2 \geq 0 \\  a \text{ $E$-primary}}} \frac{1}{2^{r_1}3^{r_2/2}N(a)^{1/2}}V \left(\frac{\pi
 2^{2r_1+1}3^{r_2}N(a) }{z} \right)\sum_{\psi \bmod {36}}H(a, \psi, y),
\end{align*}
  where
\begin{equation*}
H(a, \psi, y) =\sum_{c \text{ $E$-primary}} \ \frac {\psi(c) g_6(c)\overline{\chi_c}(a)}{N(c)^{1/2}} W\left( \frac{N(c)}{y} \right).
\end{equation*}
We estimate $H$ with the following:
\begin{lemma}
\label{lemma:Hbound}
 For any $E$-primary $a$ and any ray class character $\psi \pmod {36}$, we have
\begin{equation*}
\label{eq:Hbound}
 H(a, \psi, y) \ll y^{1/2 + \varepsilon} N(a)^{1/4} + y^{2/3} N(a)^{1/6 + \varepsilon}.
\end{equation*}
\end{lemma}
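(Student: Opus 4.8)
The plan is to write $H(a,\psi,y)$ as a Mellin--Barnes integral of the Dirichlet series $h(a,s;\psi)$ studied in Lemma~\ref{lem1}, and then shift the contour past its pole at $s=7/6$. First, applying \eqref{eq:gmult} with $r=1$ and $s=a$ gives $\overline{\chi_c}(a)\,g_6(c)=\overline{\leg{a}{c}}_6\,g_6(1,c)=g_6(a,c)$, so
\[
  H(a,\psi,y)=\sum_{c \text{ $E$-primary}}\frac{\psi(c)\,g_6(a,c)}{N(c)^{1/2}}\,W\left(\frac{N(c)}{y}\right).
\]
Since $W$ is smooth and compactly supported in $(0,\infty)$, its Mellin transform $\widehat{W}(s)=\int_0^{\infty} W(x)x^{s-1}\,\dif x$ is entire and decays faster than any fixed power of $|\Im s|$ in vertical strips. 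Applying Mellin inversion to $W(N(c)/y)$ and interchanging summation and integration --- legitimate on $\Re(s)=1+\varepsilon$ since $|g_6(a,c)|\le N(c)^{1/2}$ by \eqref{2.1} --- one obtains
\[
  H(a,\psi,y)=\frac{1}{2\pi i}\int_{(1+\varepsilon)}\widehat{W}(s)\,y^{s}\Bigl(\,\sum_{c \text{ $E$-primary}}\frac{\psi(c)\,g_6(a,c)}{N(c)^{1/2+s}}\Bigr)\dif s .
\]
By \eqref{2.1}, \eqref{eq:gmult}, \eqref{2.03} and \eqref{2.04}, the coefficient $g_6(a,c)$ vanishes unless $c$ is squarefree and $(a,c)=1$, so the inner series equals $h(a,\tfrac12+s;\psi)$ as defined in \eqref{h}; moreover a ray class character $\psi\pmod{36}$ is a Hecke character modulo $36$ of trivial infinite type, so Lemma~\ref{lem1} applies with $r=a$ and $\chi=\psi$.

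Next I would shift the contour from $\Re(s)=1+\varepsilon$, where $\Re(\tfrac12+s)=\tfrac32+\varepsilon$ lies in the range covered by Lemma~\ref{lem1}, to $\Re(s)=\tfrac12+\varepsilon$. Closing the two vertical lines by horizontal segments at height $\pm T$ and letting $T\to\infty$ is permissible because $\widehat{W}$ decays superpolynomially while $h(a,\cdot;\psi)$ grows only polynomially on vertical lines, and the horizontal segments stay far from the pole at $\tfrac12+s=7/6$. The only pole crossed is the simple one at $s=2/3$, coming from that pole of $h$; its residue contributes
\[
  \widehat{W}(2/3)\,y^{2/3}\,\text{Res}_{s=7/6}\,h(a,s;\psi)\ll y^{2/3}N(a)^{1/6+\varepsilon}
\]
by the residue bound of Lemma~\ref{lem1}, which is the second term of the asserted estimate.

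On the new line $\Re(s)=\tfrac12+\varepsilon$ one has $\Re(\tfrac12+s)=1+\varepsilon$, and since $|1+\varepsilon-7/6|>1/12$, Lemma~\ref{lem1} with $\sigma_1=\tfrac32+\varepsilon$ and $\sigma=1+\varepsilon$ gives $h(a,\tfrac12+s;\psi)\ll N(a)^{1/4+\varepsilon}(1+t^2)^{5/4+\varepsilon}$, where $t=\Im(s)$. Multiplying by $|\widehat{W}(\tfrac12+\varepsilon+it)|$, which decays faster than $(1+t^2)^{-5/4-\varepsilon}$, and integrating in $t$ shows this part of the contour contributes $\ll y^{1/2+\varepsilon}N(a)^{1/4+\varepsilon}$. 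Adding the two contributions yields $H(a,\psi,y)\ll y^{1/2+\varepsilon}N(a)^{1/4}+y^{2/3}N(a)^{1/6+\varepsilon}$, as desired.

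The argument is essentially a routine contour shift; the points that need genuine care are the identification of the inner Dirichlet series with $h(a,s;\psi)$ --- in particular, checking that the restriction $(n,r)=1$ in \eqref{h} is harmless here because $g_6(a,c)=0$ automatically when $(a,c)\neq 1$ --- and verifying that the growth bound of Lemma~\ref{lem1} on vertical lines is uniform enough in $t$ to be completely absorbed by the decay of $\widehat{W}$, so that the contour shift produces no extra power of $y$.
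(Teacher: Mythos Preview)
Your argument follows the paper's proof exactly: express $H$ as a Mellin integral of $h(a,\tfrac12+s;\psi)$, shift the contour to $\Re(s)=\tfrac12+\varepsilon$, and invoke Lemma~\ref{lem1} for both the residue at $s=2/3$ and the bound on the new line.

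The one point that needs repair is the identification step. Your claim that $g_6(a,c)=0$ whenever $(a,c)\neq 1$ (and more generally that $g_6(a,c)$ vanishes unless $c$ is squarefree and coprime to $a$) is false: by \eqref{2.04}, for a prime $\varpi\mid a$ one has $g_6(\varpi,\varpi^{2})=N(\varpi)g_3(\varpi)\neq 0$, so your displayed formula $H=\sum_{c}\psi(c)g_6(a,c)N(c)^{-1/2}W(N(c)/y)$ is not literally correct, and the subsequent reduction to $h$ does not follow from it. The clean route (implicit in the paper) is to observe first that $\overline{\chi_c}(a)=0$ whenever $(a,c)\neq1$, so those terms already vanish in the original definition of $H$; then \eqref{eq:gmult} legitimately applies to the surviving terms with $(a,c)=1$, giving $g_6(c)\overline{\chi_c}(a)=g_6(a,c)$, and the resulting sum over $c$ coprime to $a$ is by definition $h(a,\tfrac12+s;\psi)$ after Mellin inversion. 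With this small correction your proof is complete and coincides with the paper's.
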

\begin{proof}
  Note that the identity \eqref{eq:gmult} implies $g_6(c)\overline{\chi_c}(a)=g_6(a,c)$ for $(a, c) = 1$.  Introducing the Mellin transform of $w$, we get
\begin{equation}
\label{eq:HlX}
 H(a, \psi, y) =
\frac{1}{2 \pi i} \int\limits_{(2)} \widehat{W}(s) y^s h \left( a, \frac{1}{2} + s; \psi \right) \dif s.
\end{equation}

  We move the line of integration in \eqref{eq:HlX} to $\Re(s) = \frac12 + \varepsilon$, crossing a pole at $s =2/3$, which contributes by Lemma \ref{lem1}
\begin{equation*}
\ll y^{2/3} N(a)^{1/6 + \varepsilon}.
\end{equation*}
The main contribution comes from the new line of integration, which by Lemma \ref{lem1} again gives
\begin{equation*}
\ll  y^{1/2+\varepsilon} N(a)^{1/4}.
\end{equation*}
This completes the proof of Lemma \ref{lemma:Hbound}.
\end{proof}

  Now, to estimate $M_2$, we note that we may truncate the sums over $r_1, r_2, a$ so that $ 2^{2r_1+1}3^{r_2-1}N(a) \ll  z^{1+\varepsilon}$ with a
  negligibly small error. By summing trivially over $r_1, r_2, a$, one easily deduces that
\begin{equation} \label{M2est}
 M_2 \ll y^{1/2+\varepsilon} z^{3/4+\varepsilon}+y^{2/3} z^{2/3 + \varepsilon}.
\end{equation}

\subsection{Conclusion }
     Combining \eqref{M2est} with \eqref{m0} and \eqref{M'_1}, we obtain
\begin{equation*}
 M = cy \widehat{W}(1)+O\left( y^{1/2}\left (\frac y{z} \right)^{1/2+\epsilon}+y^{1/2+\varepsilon} z^{3/4+\varepsilon}+y^{2/3} z^{2/3 + \varepsilon} \right).
\end{equation*}
Setting $z=y^{2/7}$, the proof of Theorem \ref{firstmoment} is completed.
%%--------------------------------------------------------------------------------------------
%%--------------------------------------------------------------------------------------------

%%----------------------------------------------------------------------------
\section{Proof of Theorem \ref{sexticmainthm}}
\label{Section 3}
%%----------------------------------------------------------------------------

The proof of Theorem \ref{sexticmainthm} is similar to that of \cite[Theorem 1.3]{G&Zhao4} and \cite[Theorem 1.1]{G&Zhao9}.  First, as in \cite[Section 3.1]{G&Zhao20}, we can show that $\#C(X) \sim cX$ for some constant $c$ as $X \rightarrow \infty$.
   Next, we take $Z=\log^5 X $ and write
     $\mu_{[\omega]}^2(c)=M_Z(c)+R_Z(c)$ where
\begin{equation*}
    M_Z(c)=\sum_{\substack {(l) , \ l^2|c \\ N(l) \leq Z}}\mu_{[\omega]}(l) \; \quad \mbox{and} \; \quad  R_Z(c)=\sum_{\substack {(l), \  l^2|c \\ N(l) >
    Z}}\mu_{[\omega]}(l).
\end{equation*}

  We shall write $\Phi(t)$ for $\Phi_X(t)$ throughout. We define $S(X,Y; \hat{\phi}, \Phi)=S_M(X,Y; \hat{\phi}, \Phi)+S_R(X,Y; \hat{\phi}, \Phi)$ with
\[ S_M(X,Y; \hat{\phi}, \Phi) =\sum_{(c, 6)=1}M_Z(c) \sum_{\substack{ N(\varpi) \leq Y \\ \varpi \text{ $E$-primary}}} \frac {\log N(\varpi)}{\sqrt{N(\varpi)}}\leg {72c}{\varpi}  \hat{\phi} \left( \frac {\log N(
   \varpi)}{\log X} \right) \Phi\left( \frac {N(c)}{X} \right),\]
    and
\[ S_R(X,Y; \hat{\phi}, \Phi)
=\sum_{(c, 6)=1}R_Z(c) \sum_{\substack{ N(\varpi) \leq Y \\ \varpi \text{ $E$-primary}}} \frac {\log N(\varpi)}{\sqrt{N(\varpi)}}\leg {72c}{\varpi}\hat{\phi} \left( \frac {\log N(
   \varpi)}{\log X} \right) \Phi\left( \frac {N(c)}{X} \right). \]
  Here $\hat{\phi}(u)$ is smooth and has its support in the interval $(-45/43+\varepsilon, 45/43-\varepsilon)$ for some $0<\varepsilon<1$. To emphasize
    this condition, we shall set $Y=X^{45/43-\varepsilon}$ and write the condition $N(\varpi) \leq Y$ explicitly throughout this section . \newline

   Analogue to what is shown in the proof of \cite[Theorem 1.2]{G&Zhao4}, we see that in order to establish Theorem
\ref{sexticmainthm}, it suffices to show that
\begin{align*}
  \lim_{X \rightarrow \infty} \frac{S(X, Y;\hat{\phi}, \Phi)}{X \log X}=0.
\end{align*}

Using standard techniques (see \cite[Section 3.3]{G&Zhao4}), we have that
\begin{equation} \label{error1}
    S_R(X,Y; \hat{\phi}, \Phi)=o(X\log X), \quad \mbox{as} \quad X \rightarrow \infty .
\end{equation}
Indeed, with the truth of GRH, the inner-most sum of $S_R(X,Y; \hat{\phi}, \Phi)$, a character sum over primes, can be bounded very sharply (see \cite[Lemma 2.5]{G&Zhao4}) and we immediately get the estimate in \eqref{error1}. \newline

   To bound $S_M(X,Y; \hat{\phi}, \Phi)$, we rewrite it as
\begin{align*}
%%\label{4.1}
   S_{M} & (X,Y; \hat{\phi}, \Phi)  \\
    =& \sum_{\substack{ N(\varpi) \leq Y\\ \varpi \text{ $E$-primary} }} \frac {\log N(\varpi)}{\sqrt{N(\varpi)}} \leg{72}{\varpi}_6\hat{\phi} \left( \frac {\log N(
   \varpi)}{\log X} \right) \sum_{\substack{N(l) \leq Z \\  l \text{ $E$-primary}}} \mu_{[\omega]}(l)\leg {l^2}{\varpi}_6  \sum_{\substack {c \in \mz[\omega]
   \\ (c, 6)=1} } \leg {c}{\varpi}_6 \Phi \left( \frac {N(cl^2)}{X} \right). \nonumber
\end{align*}

    Applying Lemma \ref{Poissonsum6free} and noting that Lemma \ref{quarticGausssum} gives
\begin{align*}
    g_6(k, \varpi)=\overline{\leg {k}{\varpi}}_6 g_6(\varpi),
\end{align*}
   we can recast $S_M(X,Y; \hat{\phi}, \Phi)$ further as
\begin{equation}
\label{4.02}
\begin{split}
 S_{M}  (X,Y; \hat{\phi}, \Phi) =& X \sum_{\substack{ N(l) \leq Z \\  l \text{ $E$-primary} }} \frac {\mu_{[\omega]}(l)}{N(l^2)} \sum_{\substack {(m) \\m | 6}}\frac {\mu_{[\omega]}(m)}{N(m)}  \\
   & \hspace*{1cm} \times \sum_{\substack{ k \in
   \mz[\omega] \\ k \neq 0}}\sum_{\substack{ N(\varpi) \leq Y\\ \varpi \text{ $E$-primary}}}\frac {\log N(\varpi)}{N(\varpi)^{3/2}}\overline{\leg {72^5km^5l^4}{\varpi}}_6 g_6(\varpi)\hat{\phi} \left(
   \frac {\log N(
   \varpi)}{\log X} \right) \widetilde{\Phi}\left(\sqrt{\frac {N(k)X}{N(ml^2\varpi)}}\right).
   \end{split}
\end{equation}

In essentially the same manner (with some minor modifications) as in the proof of \cite[Lemma 4.2]{G&Zhao4}, we derive from Lemma~\ref{lem1} the following:
\begin{lemma}
\label{lem3} Let $(b, 6)=1$. For any $d  \in \mz[\omega]$, we have
\begin{align*}
 \sum_{\substack {N(c) \leq x \\ c \text{ $E$-primary} \\ c \equiv 0 \bmod {b}}} \overline{\leg {d}{c}}_6 g_6(c)N(c)^{-1/2}
 =    O\left( N(d)^{1/6+\varepsilon}N(b)^{\varepsilon}x^{2/3+\varepsilon}+N(d)^{1/14}N(b)^{-4/7}x^{6/7+\varepsilon} \right).
\end{align*}
\end{lemma}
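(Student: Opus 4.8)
The plan is to estimate the character sum over primes $\sum_{c} \overline{\left(\frac{d}{c}\right)}_6 g_6(c) N(c)^{-1/2}$ by relating it, via a smooth dyadic decomposition and Mellin inversion, to the Dirichlet series $h(r,s;\chi)$ studied in Lemma~\ref{lem1}. First I would insert a smooth partition of unity adapted to the range $N(c)\le x$, so that it suffices to bound the sum
\[
  \sum_{\substack{c\text{ $E$-primary}\\ c\equiv 0\bmod b}} \overline{\leg{d}{c}}_6 g_6(c) N(c)^{-1/2} F\!\left(\frac{N(c)}{y}\right)
\]
for a fixed smooth compactly supported $F$ and for each dyadic $y \ll x$; the condition $c\equiv 0\bmod b$ is handled by writing $c = bc'$, using the multiplicativity law \eqref{2.03} for $g_6$ together with \eqref{eq:gmult} to pull out the contribution of $b$ at the cost of a factor $N(b)^{1/2+\varepsilon}$ coming from $|g_6(b)|$ and the divisor-type terms, and to reduce to a sum over $c'$ coprime to $b$. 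I would also detect the $E$-primary condition and any coprimality against $d$ using a short Möbius/ray-class character decomposition as in the earlier sections, absorbing these into the twist $\chi$.

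Next I would apply Mellin inversion to $F$, writing the dyadic sum as $\frac{1}{2\pi i}\int_{(2)} \widehat{F}(s)\, y^s\, h(d', \tfrac12 + s;\chi)\, \dif s$ for a suitable $E$-primary $d'$ built from $d$ and $b$, where $h(r,s;\chi)$ is as in \eqref{h}. By Lemma~\ref{lem1}, $h(d',\tfrac12+s;\chi)$ is holomorphic for $\Re s > \tfrac12$ except for a possible pole at $s = 2/3$, with residue $\ll N(d')^{1/6+\varepsilon}$, and it satisfies the convexity-type bound $h(d',\tfrac12+s;\chi) \ll N(d')^{(\sigma_1-\Re(1/2+s)+\varepsilon)/2}(1+|\Im s|^2)^{5(\sigma_1 - \Re(1/2+s)+\varepsilon)/2}$ for $\sigma_1 = 3/2+\varepsilon$ in the relevant vertical strip. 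I would then move the contour to $\Re s = \sigma - 1/2$ for a parameter $\sigma$ to be optimized; the pole at $s=2/3$ contributes $\ll y^{2/3} N(d')^{1/6+\varepsilon}$, while the shifted contour contributes $\ll y^{\sigma-1/2} N(d')^{(3/2-\sigma+\varepsilon)/2}$ up to the rapidly converging $t$-integral supplied by $\widehat{F}$ (which beats the polynomial-in-$t$ growth of $h$). Choosing $\sigma$ at the left edge $\sigma = 1+\varepsilon$ of the allowed range (so $\Re s = 1/2+\varepsilon$, staying right of any further poles) gives the term $y^{1/2+\varepsilon} N(d')^{1/4+\varepsilon}$; summing dyadically over $y \ll x$ and translating $N(d') \approx N(d)N(b)^{?}$ back through the reductions yields the two claimed terms $N(d)^{1/6+\varepsilon}N(b)^{\varepsilon}x^{2/3+\varepsilon}$ and $N(d)^{1/14}N(b)^{-4/7}x^{6/7+\varepsilon}$.

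The exponent $1/14$ and the negative power $N(b)^{-4/7}$ in the second term indicate that the stated bound is not obtained purely from the convexity bound of Lemma~\ref{lem1}; rather, as the paper remarks, one needs to feed in the sharper information about sextic Gauss sums at primes coming from Patterson's theorem \cite{P}, exactly as in \cite[Lemma 4.2]{G&Zhao4}. Concretely, on the critical-strip contour one does not merely bound $h$ by convexity but exploits cancellation in the Gauss-sum Dirichlet series to gain a power of $x$ beyond $x^{2/3}$, and the bookkeeping of how the modulus $b$ and the twist $d$ enter the functional equation of the relevant cubic/sextic metaplectic Dirichlet series produces the $N(b)^{-4/7}$ saving and the $N(d)^{1/14}$ dependence. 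I expect this step — correctly tracking the $d$- and $b$-dependence through the Patterson-type input and verifying that the ``minor modifications'' to the argument of \cite[Lemma 4.2]{G&Zhao4} go through in the $c\equiv 0\bmod b$ setting — to be the main obstacle; the contour shift and the handling of the $b$-divisibility are routine by comparison. Once both contributions are assembled, summing the geometric dyadic series in $y$ up to $x$ loses only an $x^\varepsilon$ and completes the proof.
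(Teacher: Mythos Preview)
Your overall strategy --- smooth dyadic partition, write $c=bc'$, factor the Gauss sum via \eqref{2.03}, Mellin-invert and shift the contour into the region governed by Lemma~\ref{lem1} --- is exactly the paper's approach. The gap is in your last paragraph: you conclude that the exponents $N(d)^{1/14}$ and $N(b)^{-4/7}$ cannot come from Lemma~\ref{lem1} alone and therefore require a further appeal to Patterson's theorem. That is not so; both terms in the lemma fall out of Lemma~\ref{lem1} once the bookkeeping is done correctly.

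When you write $c=bc'$ with $c$ square-free (forced by $g_6(c)\neq 0$), the factor $|g_6(b)|N(b)^{-1/2}$ equals $1$, so there is no ``cost of $N(b)^{1/2+\varepsilon}$''. Using \eqref{2.03}, sextic reciprocity, and \eqref{eq:gmult}, the combined twist on $c'$ is
\[
\overline{\leg{d}{c'}}_6 \leg{c'}{b}_6\leg{b}{c'}_6
=\psi_b(c')\,\overline{\leg{db^4}{c'}}_6,
\]
so after Mellin inversion the dyadic sum at height $y\le x/N(b)$ is (up to harmless factors and a short M\"obius sum enforcing $(c',b)=1$) the integral of $\widehat F(s)\,y^{s}\,h(db^4,\tfrac12+s;\psi_b)$. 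Now apply Lemma~\ref{lem1} with $r=db^4$ and with the contour at $\Re(\tfrac12+s)=\sigma=19/14$, which lies in the admissible range $[1+\varepsilon,\tfrac32+\varepsilon]$ and does not cross the pole at $7/6$. This gives
\[
\ll \Bigl(\tfrac{x}{N(b)}\Bigr)^{6/7}N(db^4)^{(3/2-19/14)/2+\varepsilon}
= x^{6/7+\varepsilon}N(d)^{1/14}N(b)^{-4/7},
\]
which is precisely the second term. Pushing the contour further left past $\sigma=7/6$ picks up the residue $\ll N(db^4)^{1/6+\varepsilon}$, contributing $\ll (x/N(b))^{2/3}N(d)^{1/6+\varepsilon}N(b)^{2/3+\varepsilon}=x^{2/3+\varepsilon}N(d)^{1/6+\varepsilon}N(b)^{\varepsilon}$, the first term. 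No input beyond Lemma~\ref{lem1} is needed; the ``main obstacle'' you flag does not exist, and what you call the routine part is in fact the whole proof.
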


Using Lemma~\ref{lem3} instead of Proposition 1 of \cite[p. 198]{P} in the sieve identity in Section 4 of \cite{P} and noting that in our case Proposition
2 on \cite[p. 206]{P} is still valid, we obtain that (taking $u_3=X/u_1, u_1=X^{10/(5n+2R)}N(b)^{-5/(5n+2R)}$ as in \cite{P} and noting that we have $n=6,
R=7$ in our case)
\begin{align*}
   E(x;m, k, l) :=\sum_{\substack {N(\varpi) \leq x \\ \varpi \text { $E$-primary} }}\overline{\leg {72^5km^5l^4}{\varpi}}_6 \frac {g_6(\varpi)
   \Lambda(\varpi)}{\sqrt{N(\varpi)}} \ll x^{\varepsilon} \left( N(km^5l^4)^{7/132}x^{59/66}+N(km^5l^4)^{1/44+\varepsilon}x^{1-1/22}\right).
\end{align*}

   It follows from this, \eqref{bounds} and partial summation that
\begin{align*}
\sum_{\substack{ k \in
   \mz[\omega] \\ k \neq 0}} \sum_{\substack {N(\varpi) \leq Y \\ \varpi \text { $E$-primary} }} & \frac {\log N(\varpi)}{N(\varpi)^{3/2}}\overline{\leg {72^5km^5l^4}{\varpi}}_6 g_6(\varpi)\hat{\phi} \left( \frac {\log N(
   \varpi)}{\log X} \right) \widetilde{\Phi}\left(\sqrt{\frac {N(k)X}{N(ml^2\varpi)}}\right) \\
    \ll & \frac {N(m^5l^4)^{7/132+\varepsilon}N(ml^2)^{139/132+\varepsilon}Y^{125/132+\epsilon}U^{3}}{X^{139/132}}+\frac {N(m^5l^4)^{1/44+\varepsilon}N(ml^2)^{45/44+\varepsilon}Y^{43/44+\epsilon}U^{3}}{X^{45/44}}.
\end{align*}

   We then conclude from \eqref{error1} and \eqref{4.02} that
\begin{equation*}
   S(X,Y; {\hat \phi}, \Phi)= o\left( X \log X \right), \quad \mbox{as} \quad X \to \infty.
\end{equation*}
mindful of $U=\log \log X$, $Z=\log^5 X$. This completes the proof of Theorem \ref{sexticmainthm}. \newline

\noindent{\bf Acknowledgments.} P. G. is supported in part by NSFC grant 11871082 and L. Z. by the FRG grant PS43707 at the University of New South Wales (UNSW).  The authors would like to thank the anonymous referee for his/her careful reading of the paper and many helpful comments.

\bibliography{biblio}
\bibliographystyle{amsxport}

\vspace*{.5cm}

\noindent\begin{tabular}{p{6cm}p{6cm}p{6cm}}
School of Mathematical Sciences & School of Mathematics and Statistics \\
Beihang University & University of New South Wales \\
Beijing 100191 China & Sydney NSW 2052 Australia \\
Email: {\tt penggao@buaa.edu.cn} & Email: {\tt l.zhao@unsw.edu.au} \\
\end{tabular}

\end{document}